\begin{document}

\theoremstyle{plain}
\newtheorem{C}{Convention}
\newtheorem{SA}{Standing Assumption}[section]
\newtheorem{SAS}{Standing Assumption}[subsection]
\newtheorem{theorem}{Theorem}[section]
\newtheorem*{theoremo}{Theorem}
\newtheorem{condition}{Condition}[section]
\newtheorem{lemma}[theorem]{Lemma}
\newtheorem{fact}[theorem]{Fact}
\newtheorem{proposition}[theorem]{Proposition}
\newtheorem{corollary}[theorem]{Corollary}
\newtheorem{claim}[theorem]{Claim}
\newtheorem{definition}[theorem]{Definition}
\newtheorem{Ass}[theorem]{Assumption}
\newcommand{\q}{Q}
\theoremstyle{definition}
\newtheorem{remark}[theorem]{Remark}
\newtheorem{note}[theorem]{Note}
\newtheorem{example}[theorem]{Example}
\newtheorem{assumption}[theorem]{Assumption}
\newtheorem*{notation}{Notation}
\newtheorem*{assuL}{Assumption ($\mathbb{L}$)}
\newtheorem*{assuAC}{Assumption ($\mathbb{AC}$)}
\newtheorem*{assuEM}{Assumption ($\mathbb{EM}$)}
\newtheorem*{assuES}{Assumption ($\mathbb{ES}$)}
\newtheorem*{assuM}{Assumption ($\mathbb{M}$)}
\newtheorem*{assuMM}{Assumption ($\mathbb{M}'$)}
\newtheorem*{assuL1}{Assumption ($\mathbb{L}1$)}
\newtheorem*{assuL2}{Assumption ($\mathbb{L}2$)}
\newtheorem*{assuL3}{Assumption ($\mathbb{L}3$)}
\newtheorem{charact}[theorem]{Characterization}
\newtheorem*{discussion}{Discussion}
\newtheorem*{application}{Application}
\newcommand{\notiz}{\textup} 
\renewenvironment{proof}{{\parindent 0pt \it{ Proof:}}}{\mbox{}\hfill\mbox{$\Box\hspace{-0.5mm}$}\vskip 16pt}
\newenvironment{proofthm}[1]{{\parindent 0pt \it Proof of Theorem #1:}}{\mbox{}\hfill\mbox{$\Box\hspace{-0.5mm}$}\vskip 16pt}
\newenvironment{prooflemma}[1]{{\parindent 0pt \it Proof of Lemma #1:}}{\mbox{}\hfill\mbox{$\Box\hspace{-0.5mm}$}\vskip 16pt}
\newenvironment{proofcor}[1]{{\parindent 0pt \it Proof of Corollary #1:}}{\mbox{}\hfill\mbox{$\Box\hspace{-0.5mm}$}\vskip 16pt}
\newenvironment{proofprop}[1]{{\parindent 0pt \it Proof of Proposition #1:}}{\mbox{}\hfill\mbox{$\Box\hspace{-0.5mm}$}\vskip 16pt}
\newcommand{\s}{\mathfrak{s}}
\newcommand{\X}{X} 
\newcommand{\f}{\mathfrak{f}}
\newcommand{\g}{\mathfrak{g}}
\newcommand{\oP}{P^\circ}
\renewcommand{\c}{\mathfrak{c}}
\renewcommand{\a}{\mathfrak{a}}
\renewcommand{\b}{\mathfrak{b}}
\renewcommand{\C}{\mathsf{C}}
\newcommand{\Y}{Y} 
\newcommand{\z}{\mathfrak{z}}
\renewcommand{\S}{\mathcal{O}}
\newcommand{\xp}{P_{x_0}}
\newcommand{\xq}{Q_{x_0}}

\newcommand{\Law}{\ensuremath{\mathop{\mathrm{Law}}}}
\newcommand{\loc}{{\mathrm{loc}}}
\newcommand{\Log}{\ensuremath{\mathop{\mathscr{L}\mathrm{og}}}}
\newcommand{\Meixner}{\ensuremath{\mathop{\mathrm{Meixner}}}}
\newcommand{\of}{[\hspace{-0.06cm}[}
\newcommand{\gs}{]\hspace{-0.06cm}]}

\let\MID\mid
\renewcommand{\mid}{|}

\let\SETMINUS\setminus
\renewcommand{\setminus}{\backslash}

\def\stackrelboth#1#2#3{\mathrel{\mathop{#2}\limits^{#1}_{#3}}}

\renewcommand{\theequation}{\thesection.\arabic{equation}}
\numberwithin{equation}{section}

\newcommand\llambda{{\mathchoice
      {\lambda\mkern-4.5mu{\raisebox{.4ex}{\scriptsize$\backslash$}}}
      {\lambda\mkern-4.83mu{\raisebox{.4ex}{\scriptsize$\backslash$}}}
      {\lambda\mkern-4.5mu{\raisebox{.2ex}{\footnotesize$\scriptscriptstyle\backslash$}}}
      {\lambda\mkern-5.0mu{\raisebox{.2ex}{\tiny$\scriptscriptstyle\backslash$}}}}}

\newcommand{\prozess}[1][L]{{\ensuremath{#1=(#1_t)_{0\le t\le T}}}\xspace}
\newcommand{\prazess}[1][L]{{\ensuremath{#1=(#1_t)_{0\le t\le T^*}}}\xspace}

\newcommand{\tr}{\operatorname{tr}}
\newcommand{\lijepoa}{{\mathscr{A}}}
\newcommand{\lijepob}{{\mathscr{B}}}
\newcommand{\lijepoc}{{\mathscr{C}}}
\newcommand{\lijepod}{{\mathscr{D}}}
\newcommand{\lijepoe}{{\mathscr{E}}}
\newcommand{\lijepof}{{\mathscr{F}}}
\newcommand{\lijepog}{{\mathscr{G}}}
\newcommand{\lijepok}{{\mathscr{K}}}
\newcommand{\lijepoo}{{\mathscr{O}}}
\newcommand{\lijepop}{{\mathscr{P}}}
\newcommand{\lijepoh}{{\mathscr{H}}}
\newcommand{\lijepom}{{\mathscr{M}}}
\newcommand{\lijepou}{{\mathscr{U}}}
\newcommand{\lijepov}{{\mathscr{V}}}
\newcommand{\lijepoy}{{\mathscr{Y}}}
\newcommand{\cF}{{\mathscr{F}}}
\newcommand{\cG}{{\mathscr{G}}}
\newcommand{\cH}{{\mathscr{H}}}
\newcommand{\cM}{{\mathscr{M}}}
\newcommand{\cD}{{\mathscr{D}}}
\newcommand{\bD}{{\mathbb{D}}}
\newcommand{\bF}{{\mathbb{F}}}
\newcommand{\G}{{\mathbf{G}}}
\newcommand{\bH}{{\mathbb{H}}}
\newcommand{\dd}{\operatorname{d}\hspace{-0.05cm}}
\newcommand{\ddd}{\operatorname{d}}
\newcommand{\er}{{\mathbb{R}}}
\newcommand{\ce}{{\mathbb{C}}}
\newcommand{\erd}{{\mathbb{R}^{d}}}
\newcommand{\en}{{\mathbb{N}}}
\newcommand{\de}{{\mathrm{d}}}
\newcommand{\im}{{\mathrm{i}}}
\newcommand{\indik}{{\mathbf{1}}}
\newcommand{\D}{{\mathbb{D}}}
\newcommand{\E}{E}
\newcommand{\N}{{\mathbb{N}}}
\newcommand{\Q}{{\mathbb{Q}}}
\renewcommand{\P}{{\mathbb{P}}}
\newcommand{\ud}{\operatorname{d}\!}
\newcommand{\ii}{\operatorname{i}\kern -0.8pt}
\newcommand{\cadlag}{c\`adl\`ag }
\newcommand{\p}{P}
\newcommand{\F}{\mathbf{F}}
\newcommand{\1}{\mathbb{I}}
\newcommand{\lle}{\langle\hspace{-0.085cm}\langle}
\newcommand{\rre}{\rangle\hspace{-0.085cm}\rangle}
\newcommand{\llbr}{[\hspace{-0.085cm}[}
\newcommand{\rrbr}{]\hspace{-0.085cm}]}
\newcommand{\oY}{Y^\circ}
\newcommand{\oQ}{P^\circ_{x_0}}

\def\EM{\ensuremath{(\mathbb{EM})}\xspace}

\newcommand{\la}{\langle}
\newcommand{\ra}{\rangle}

\newcommand{\Norml}[1]{%
{|}\kern-.25ex{|}\kern-.25ex{|}#1{|}\kern-.25ex{|}\kern-.25ex{|}}

\title[Absolute Continuity/Singularity of Multidimensional Diffusions]{On Absolute Continuity and Singularity of Multidimensional Diffusions} 
\author[D. Criens]{David Criens}
\address{D. Criens - Technical University of Munich, Center for Mathematics, Germany}
\email{david.criens@tum.de}
\keywords{absolute continuity, singularity, multidimensional diffusion, uniformly integrable martingale, explosion, integral test, perpetual integral, random time change\vspace{1ex}}
\thanks{2020 \emph{Mathematics Subject Classification.} \textup{60J60, 60J35, 60G44, 60H10}}

\date{\today}
\maketitle

\frenchspacing
\pagestyle{myheadings}

\begin{abstract}
	Consider two laws \(P\) and \(Q\) of multidimensional possibly explosive diffusions with common diffusion coefficient \(\a\) and drift coefficients \(\b\) and \(\b + \a \c\), respectively, and the law \(\oP\) of an auxiliary diffusion with diffusion coefficient  \(\langle \c, \a\c\rangle^{-1}\a\) and drift coefficient \(\langle \c, \a\c\rangle^{-1}\b\).
	We show that \(P \ll Q\) if and only if the auxiliary diffusion \(\oP\) explodes almost surely and that \(P\perp Q\) if and only if the auxiliary diffusion \(\oP\)  almost surely does not explode. As applications we derive a Khasminskii-type integral test for absolute continuity and singularity, an integral test for explosion of time-changed Brownian motion, and we discuss applications to mathematical finance.
\end{abstract}

\section{Introduction}
Consider two laws \(P\) and \(Q\) of multidimensional possibly explosive diffusions with common diffusion coefficient \(\a\) and drift coefficients \(\b\) and \(\b + \a \c\), respectively. We are interested in finding analytic conditions for the absolute continuity \(P \ll Q\) and the singularity \(P \perp Q\). Such conditions are of interest in many branches of probability theory. In mathematical finance, for instance, mutual absolute continuity is of importance in the study of the absence of arbitrage, see \cite{DS,Shir}.

For one-dimensional diffusions precise integral test were proven in \cite{Cherny2006} under the Engelbert--Schmidt conditions.
For multidimensional diffusions the situation is less well-understood and only a few analytic conditions are known, see \cite{BENARI2005179} for an integral test for Fuchsian diffusions.

The starting point for our research is the following probabilistic characterization of absolute continuity and singularity:
Let \(X\) be the coordinate process and set
\begin{align*} 
A_t \triangleq \int_0^{t \wedge \theta} \langle \c(\X_s), \a (\X_s) \c(\X_s)\rangle ds, \quad t \in \mathbb{R}_+, 
\end{align*}
where \(\theta\) is the explosion time.
It has been proven in \cite{BENARI2005179, criensglau2018} that \(P \ll Q\) is equivalent to \(P(A_\theta < \infty) = 1\) and that \(P \perp Q\) is equivalent to \(P(A_\theta = \infty) = 1\). 
In other words, \(P \ll Q\) and \(P\perp Q\) are characterized by \(P\)-a.s. divergence and convergence of the perpetual integral \(A_\theta\).
Again, these properties are well-understood for one-dimensional diffusions, see \cite{Salminen06,CUI2014118,khoshnevisan2006, mijatovic2012, MUSIELA198679}, and it seems that less work has been done for the multidimensional case, see \cite{doi:10.1002/mana.19871310120,10.1007/BFb0083762} for results concerning Bessel processes, \cite{criensglau2018} for some conditions in radial cases, and \cite{kuhn2019} for results on divergence in case \(X\) is a conservative Feller process possibly with jumps.

In \cite{Salminen06,CUI2014118,khoshnevisan2006,kuhn2019} the perpetual integral \(A_\theta\) was related to the hitting time of a time-changed process.
In this article we pick up this idea and prove the following:
Let \(\f \colon \mathbb{R}^d \to (0, \infty)\) be a Borel function which is locally bounded away from zero and infinity. Under the assumptions that the diffusion \(P\) exists and \(\b\) and \(\a\) are locally bounded, we show existence of a diffusion \(\oP\) with diffusion coefficient \(\f^{-1} \a\) and drift coefficient \(\f^{-1} \b\) such that the law of the perpetual integral
\[
T_\theta \triangleq \int_0^\theta \f (\X_s) ds
\]
under \(P\) coincides with the law of the explosion time \(\theta\) under \(\oP\). Furthermore, we show that \(\oP\) is unique  whenever \(P\) is unique.

Returning to our initial problem, we note that in case \(\f = \langle \c, \a \c\rangle\) the absolute continuity \(P \ll Q\) is equivalent to \(\oP(\theta < \infty) = 1\) and the singularity \(P \perp Q\) is equivalent to \(\oP(\theta= \infty) = 1\).
This observation is very useful, because the literature contains many conditions for explosion and non-explosion of multidimensional diffusions, see \cite{mckean1969stochastic,pinsky1995positive,SV}. 
For illustration, we formulate a Khasminskii-type integral test for absolute continuity and singularity. 

The result can also be applied in the converse direction: In case we have criteria for absolute continuity and singularity, these can be used to deduce explosion criteria for time-changed diffusions. To illustrate this, we derive an integral test for almost sure explosion and non-explosion of time-changed Brownian motion, using results on singularity of Fuchsian diffusions proven in \cite{BENARI2005179}. The integral test improves several conditions known in the literature, see \cite{mckean1969stochastic}.

The absolute continuity \(P \ll Q\) is intrinsically connected to the uniform integrable (UI) \(Q\)-martingale property of a certain stochastic exponential (see Eq. \ref{eq: Z} below), which has been studied for one-dimensional diffusions in \cite{MU(2012)}. 
Independent of the dimension, it is known that for the conservative case the loss of the martingale property has a one-to-one relation to the explosion of an auxiliary diffusion, see, e.g., \cite{CFY,Sin}. This turned out to be wrong in the non-conservative setting of \cite{MU(2012)}.  Our result explains that for the UI martingale property the statement is true irrespective whether the diffusions are conservative or non-conservative.

As a third application, we use the relation of the UI martingale property and absolute continuity to study a problem in mathematical finance: We derive an explosion criterion for a local martingale measure to be a true martingale measure on the infinite time horizon.

Let us close the introduction with comments on related literature.
To the best of our knowledge, the relation of absolute continuity/singularity and explosion of a time-changed process has not been reported before.
We think that our new integral tests for absolute continuity/singularity and explosion/non-explosion illustrate that working out this connection is fruitful.  
The integral tests in \cite{Cherny2006,MU(2012)} for absolute continuity, singularity and the UI martingale property in one-dimensional frameworks can be deduced from our result and Feller's test for explosion under additional assumptions on the coefficients. 
For general one-dimensional diffusion models with finite and infinite time horizon, analytic conditions for a local martingale measure to be a martingale measure were given in \cite{MU-det}. 
Conditions for one- and multidimensional diffusion models with finite time horizon were proven in \cite{doi:10.1142/S0219024918500024, criens20,Sin}. We extend part of these results to a multidimensional setting with infinite time horizon.
Beginning with \cite{doi:10.1137/1103025}, existence and uniqueness results for time-changed Markov processes have a long history, see, e.g., \cite{bottcher2014levy,SV} for more information. In most of the classical work, the function \(\f\) is assumed to be uniformly bounded away from zero, which implies that the time-changed process is conservative in case the original process is conservative. 
In combination with conditions for non-explosion, general positive continuous \(\f\) are considered in the recent article \cite{kuhn2019}. 
The novelty of our existence and uniqueness result is that we work without additional assumptions for non-explosion. This is crucial for the question of absolute continuity and singularity. Moreover, we work under sort of minimal assumptions on the original diffusion \(P\) by assuming only existence and locally bounded coefficients.

The article is structured as follows: In Section \ref{sec: MR} we present our main results, in Section \ref{sec: appl} we discuss applications and in Section \ref{sec: pf main} we prove our main theorem.

\section{Main Results}\label{sec: MR}
Let \(\mathbb{R}^d_\Delta\triangleq \mathbb{R}^d \cup \{\Delta\}\) be the one-point compactification of \(\mathbb{R}^d\). We define \(\Omega\) be the space of all continuous functions \(\mathbb{R}_+ \to \mathbb{R}^d_\Delta\) which are absorbed in \(\Delta\).
Let \(\X\) be the coordinate process on \(\Omega\), i.e. \(\X_t(\omega) = \omega(t)\) for all \(\omega \in \Omega\) and \(t \in \mathbb{R}_+\), and define \(\mathcal{F} \triangleq \sigma (\X_t, t \in \mathbb{R}_+), \mathcal{F}^o_t \triangleq \sigma (\X_s, s \in [0, t])\) and \(\mathcal{F}_t \triangleq \mathcal{F}^o_{t +} \triangleq \bigcap_{s > t} \mathcal{F}^o_s\) for all \(t \in \mathbb{R}_+\). Except stated otherwise, all terms such as \emph{martingale, local martingale, stopping time} etc. correspond to \(\F \triangleq (\mathcal{F}_t)_{t \geq 0}\) as underlying filtration.
Let \(\mathbb{S}^{d \times d}\) be the space of symmetric non-negative definite real-valued \(d  \times d\) matrices.

For \(n \in \mathbb{N}\) we set 
\[
\theta_n \triangleq \inf (t \in \mathbb{R}_+ \colon \|\X_t\| \geq n), \quad \theta \triangleq \inf(t \in \mathbb{R}_+ \colon X_t = \Delta) = \lim_{m \to \infty} \theta_m.
\]
It is well-known that \(\theta_n\) and \(\theta\) are stopping times.
We fix two Borel functions \(\b \colon \mathbb{R}^d \to \mathbb{R}^d\) and \(\a \colon \mathbb{R}^d \to \mathbb{S}^{d \times d}\) and impose the following:
\begin{enumerate}
\item[(S1)] \(\b\) and \(\a\) are locally bounded.
\end{enumerate}
Here, \(S\) is an acronym for \emph{standing}, which indicates that the assumption is in force for the remainder of the section.

The following definition of a \emph{martingale problem} is taken from \cite{pinsky1995positive}, where it is called \emph{generalized martingale problem} due to the possibility of explosion. For simplicity we drop the term \emph{generalized}.
\begin{definition}
	We say that a probability measure \(P\) on \((\Omega, \mathcal{F})\) solves the \emph{martingale problem} MP \((\a, \b, x_0)\), where \(x_0 \in \mathbb{R}^d\), if \(P(\X_0 = x_0) = 1\) and for all \(n \in \mathbb{N}\) and \(f \in C^2(\mathbb{R}^d)\) the process
	\[
	f(\X_{\cdot \wedge \theta_n}) - f(x_0) - \int_0^{\cdot \wedge \theta_n} \big( \langle \nabla f (\X_s), \b(\X_s)\rangle + \tfrac{1}{2} \textup{tr} (\nabla^2 f (\X_s)\a (\X_s))\big)ds
	\]
	is a \(P\)-martingale. A solution \(P\) is called \emph{conservative} (or \emph{non-explosive}), if \(P(\theta = \infty) = 1\),
	and \emph{almost surely explosive}, if \(P(\theta < \infty) = 1\).
\end{definition}

The following theorem is the key observation in this article.
It explains that perpetual integrals are distributed as the explosion time of a time-changed diffusion. 
The proof is given in Section \ref{sec: pf main} below. 
\begin{theorem}\label{theo: tc}
	Let \(\f \colon \mathbb{R}^d \to (0, \infty)\) be Borel and locally bounded away from zero and infinity. There exist \(\mathbb{R}^d_\Delta\)-valued right-continuous measurable processes \(Y\) and \(U\) such that for every \(x_0 \in \mathbb{R}^d\) and every solution \(P_{x_0}\) to the MP \((\a, \b, x_0)\) the following hold:
	\begin{enumerate}
		\item[\textup{(i)}] \(Y\) has \(\xp\)-a.s. continuous paths.
			\item[\textup{(ii)}]  \(\xp\)-a.s. \(U \circ Y = X\).
		\item[\textup{(iii)}] \(\oP_{x_0} \triangleq \xp \circ Y^{-1}\) solves the MP \((\f^{-1} \a, \f^{-1} \b, x_0)\).
		\item[\textup{(iv)}] For all Borel sets \(A \subseteq [0, \infty]\)
			\begin{align}\label{eq: exp id}
		\xp \Big(\int_0^\theta \f(\X_s) ds  \in A\Big) = \oP_{x_0}(\theta  \in A).
		\end{align}
	\end{enumerate}	
In particular, existence and uniqueness hold simultaneously for the MPs \((\a, \b, x_0)\) and \((\f^{-1}\a, \f^{-1} \b, x_0)\).
\end{theorem}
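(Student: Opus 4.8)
The plan is to construct $Y$ and $U$ pathwise on $\Omega$ by inverting a random time change, and then to verify (i)--(iv) by translating the martingale problem for $P_{x_0}$ through this change of time. First I would introduce, on $\Omega$, the additive functional
\[
T_t \triangleq \int_0^{t \wedge \theta} \f(\X_s)\, ds, \qquad t \in \mathbb{R}_+,
\]
which, by (S1) and the hypothesis on $\f$, is finite, continuous and nondecreasing on $[0,\theta)$, and strictly increasing there since $\f > 0$. Set $T_\infty \triangleq \lim_{t\to\infty} T_t = T_\theta \in (0,\infty]$ and let $S \colon [0, T_\infty) \to [0,\theta)$ be the right-continuous generalized inverse $S_u \triangleq \inf(t \ge 0 \colon T_t > u)$; because $T$ is strictly increasing and continuous on $[0,\theta)$, $S$ is actually the genuine continuous strictly increasing inverse. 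I would then \emph{define} the processes by
\[
Y_u \triangleq \X_{S_u} \ \text{ for } u < T_\infty, \qquad Y_u \triangleq \Delta \ \text{ for } u \ge T_\infty,
\]
and, symmetrically, $U_t \triangleq Y_{T_t} = \X_t$ for $t < \theta$ (extended by $\Delta$ afterwards). Note $Y$ and $U$ are defined directly in terms of the coordinate process and so are the same measurable maps for every $x_0$ and every solution, as the statement requires; right-continuity and measurability are immediate from those of $S$ and $\X$. Parts (i) and (ii) are then essentially built in: (i) follows because $S$ is continuous and $\X$ is $P_{x_0}$-a.s.\ continuous on $[0,\theta)$, with the only subtlety being continuity \emph{at} the explosion time $T_\infty$ of $Y$, i.e.\ that $Y_{u} \to \Delta$ as $u \uparrow T_\infty$ whenever $T_\infty < \infty$ — this needs the argument that $T_\infty < \infty$ forces $\theta < \infty$ (since $\f$ is locally bounded away from zero, a bounded excursion would make $T$ diverge only if it lasted forever), hence $\X_{S_u} = \X_t \to \Delta$; and (ii) is the identity $U \circ Y = \X$, which on $[0,\theta)$ is $Y_{T_t} = \X_{S_{T_t}} = \X_t$ and on $[\theta,\infty)$ is the statement that both sides equal $\Delta$.

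The substantive part is (iii), and it should be obtained by the standard ``random time change of a martingale problem'' computation. Fix $f \in C^2(\mathbb{R}^d)$ and $n \in \mathbb{N}$; write $\lijepoa$ for the original generator $\langle \nabla f, \b\rangle + \tfrac12 \tr(\nabla^2 f\, \a)$ and $\lijepoa^\circ$ for the time-changed generator $\f^{-1}\lijepoa$, so that $\f^{-1}\lijepoa f = \langle \nabla f, \f^{-1}\b\rangle + \tfrac12 \tr(\nabla^2 f\, \f^{-1}\a)$, which is exactly the generator of the MP $(\f^{-1}\a, \f^{-1}\b, x_0)$. Under $P_{x_0}$, the process
\[
M^{(n)}_t \triangleq f(\X_{t\wedge\theta_n}) - f(x_0) - \int_0^{t\wedge\theta_n} \lijepoa f(\X_s)\, ds
\]
is a martingale; I would time-change it by $S$. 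Writing $\sigma_n \triangleq T_{\theta_n}$ for the image of $\theta_n$ under the time change (note $\sigma_n$ is a stopping time for the time-changed filtration $\mathcal{G}_u \triangleq \mathcal{F}_{S_u}$ and $\sigma_n = \theta^\circ_n$ in the notation for $Y$, i.e.\ the $n$-th exit time of $Y$), the change of variable $s = S_v$, $ds = dS_v$ together with $dT_{S_v} = dv$ and $\frac{dS_v}{dv} = \f(\X_{S_v})^{-1} = \f(Y_v)^{-1}$ gives
\[
\int_0^{(u\wedge\sigma_n)} \lijepoa f(\X_{S_v})\, \f(Y_v)^{-1}\, dv = \int_0^{\theta_n \wedge S_u} \lijepoa f(\X_s)\, ds,
\]
so that $M^{(n)}_{S_u} = f(Y_{u\wedge\sigma_n}) - f(x_0) - \int_0^{u\wedge\sigma_n} \f^{-1}\lijepoa f(Y_v)\, dv$. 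Then I invoke the optional-stopping/time-change theorem for martingales (the fact that a martingale composed with an increasing family of stopping times of a sub-filtration remains a martingale for that sub-filtration — here $S_u$ is a stopping time of $\F$ for each $u$, finite on $\{u < T_\infty\}$, and the needed uniform integrability on compacts follows from localizing at $\sigma_n$, where the integrand $\lijepoa f$ is bounded by local boundedness of $\a,\b$ and $f \in C^2$) to conclude that $u \mapsto M^{(n)}_{S_u}$ is a $\oP_{x_0} = P_{x_0}\circ Y^{-1}$-martingale in the filtration $\mathbf{G}$. Since $\Delta$-absorption of $Y$ together with $\sigma_n \to T_\infty$ and right-continuity of all objects passes to the limit, this is exactly the defining martingale property of the MP $(\f^{-1}\a, \f^{-1}\b, x_0)$, proving (iii). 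A minor care point: one must check $Y$ is absorbed in $\Delta$, i.e.\ lands in the right path space $\Omega$, which follows from the continuity discussion in (i) and the construction.

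Part (iv) is then almost a bookkeeping step: by the definition of $T_\infty$ and of the explosion time of $Y$, one has $\{Y_u = \Delta\} = \{u \ge T_\infty\}$, hence the explosion time of $Y$ equals $T_\infty = T_\theta = \int_0^\theta \f(\X_s)\, ds$ pointwise on $\Omega$; pushing forward by $Y$ gives, for every Borel $A \subseteq [0,\infty]$,
\[
\oP_{x_0}(\theta \in A) = P_{x_0}\big(Y^{-1}\{\theta \in A\}\big) = P_{x_0}\Big( \int_0^\theta \f(\X_s)\, ds \in A \Big),
\]
which is (\ref{eq: exp id}). Finally, the ``in particular'' claim about simultaneous existence and uniqueness follows by symmetry: existence of a solution to MP $(\a,\b,x_0)$ yields, via the construction, a solution to MP $(\f^{-1}\a, \f^{-1}\b, x_0)$; and the construction is invertible — replacing $\f$ by $\f^{-1}$ and running the same argument on a solution of the time-changed MP recovers the original, so each MP's solvability and uniqueness transfers to the other (uniqueness because $Y \mapsto U\circ Y$ composition and its inverse are deterministic bijections between path spaces intertwining the two laws, so two solutions of one MP that are pushed forward to the same law must have been equal). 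The main obstacle I anticipate is the careful handling of behaviour at the explosion time — in particular showing $Y$ is continuous and absorbed at $\Delta$, that the localizing stopping times $\sigma_n$ exhaust $[0,T_\infty)$, and that the martingale property survives passage to this (possibly finite) limit — since away from explosion the computation is the classical time-change of an Itô/martingale problem.
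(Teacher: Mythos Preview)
Your overall architecture matches the paper's: define the additive functional $T_t$, its right inverse $L$ (your $S$), set $Y = X_{L}$ on $[0,T_\theta)$ and $\Delta$ afterwards, and push the martingale problem through the time change. Parts (iii), (iv) and the existence/uniqueness transfer are handled essentially as the paper does. However, your argument for (i) contains a genuine gap.

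You write that continuity of $Y$ at its explosion time $T_\infty$ follows because ``$T_\infty < \infty$ forces $\theta < \infty$ (since $\f$ is locally bounded away from zero, a bounded excursion would make $T$ diverge only if it lasted forever)''. This implication is \emph{false}, even $P_{x_0}$-almost surely. The paper explicitly warns against it: on $\{T_\theta < \infty,\ \theta = \infty\}$ one must analyse the behaviour of $X_t$ as $t\to\infty$, and a pathwise example with $\theta=\infty$ and $T_\theta<\infty$ is given. A concrete probabilistic instance is three-dimensional Brownian motion with $\f(x)$ decaying sufficiently fast at infinity (cf.\ Corollary \ref{coro: sure expl}): there $\theta=\infty$ a.s.\ yet $T_\theta<\infty$ a.s. In that situation continuity of $Y$ at $T_\theta$ still holds, but for the reason that $\|X_t\|\to\infty$, i.e.\ $X_t\to\Delta$ in the one-point compactification, \emph{not} because $\theta<\infty$. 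The real content of (i) is therefore: $P_{x_0}$-a.s., on $\{T_\theta<\infty\}$ one has $X_t\to\Delta$ as $t\uparrow\theta$ (whether or not $\theta$ is finite). Your one-line argument covers only the trivial case where the path is eventually trapped in a compact set; it does not rule out paths with $\theta=\infty$ that visit a fixed compact infinitely often while making arbitrarily large excursions in between, for which $\limsup \|X_t\|=\infty$ but $\liminf \|X_t\|<\infty$ and $X_t\not\to\Delta$.

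Excluding such oscillatory behaviour on $\{T_\theta<\infty\}$ is the hard part of the theorem and occupies the paper's Lemma \ref{lem: cont in T}. The paper introduces successive entrance/exit times $\sigma_k^m,\tau_k^m$ of the annulus $\{m\le\|x\|\le m+1\}$ and shows, via a conditional Dambis--Dubins--Schwarz argument and an inductive Laplace-transform bound $E^P[\prod_{k\le n}\1_{\{\sigma_k^m<\infty\}}e^{-(\tau_k^m-\sigma_k^m)}]\le \C^n$ with $\C<1$, that $P$-a.s.\ if the path returns to $\{\|x\|\le m\}$ infinitely often then $\sum_k(\tau_k^m-\sigma_k^m)=\infty$, which forces $T_\theta=\infty$ because $\f$ is bounded below on $\{\|x\|\le m+1\}$. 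This is precisely the missing ingredient in your sketch; without it, (i) is not established and hence $P_{x_0}\circ Y^{-1}$ is not known to be supported on $\Omega$, so (iii) cannot even be formulated.

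A secondary point: your definition ``$U_t \triangleq Y_{T_t}$'' is not a definition of $U$ as a measurable map on the canonical space $\Omega$ (which is what (ii) requires, since $U\circ Y$ means evaluating $U$ on the path $Y(\omega)$). The paper defines $U$ via the reverse additive functional $S_t=\int_0^{t\wedge\theta}\f^{-1}(X_s)\,ds$ and its inverse, and then verifies $S\circ Y = L$ to obtain $U\circ Y = X$.
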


\begin{remark}
	While our main interest lies in the equality \eqref{eq: exp id}, also the existence and uniqueness parts of Theorem \ref{theo: tc} are useful, because they lead to localizations of known existence and uniqueness theorems. For example, suppose that \(\textup{det}(\a)\) is locally bounded away from zero. This is a weak ellipticity assumption without continuity. There exists a continuous function \(\f \colon \mathbb{R}^d \to (0, \infty)\) such that \(\f \b\) and \(\f \a\) are bounded. Thus, by an existence result from \cite{ROZKOSZ1991187}, the MP \((\f \a, \f \b, x_0)\) has a (conservative) solution. 
	Now, Theorem \ref{theo: tc} implies that the MP \((\a, \b, x_0)\) has a (not necessarily conservative) solution, too. We give more details on this in Appendix \ref{app}.
\end{remark}

Fix a third Borel function \(\c \colon \mathbb{R}^d \to \mathbb{R}^d\) with the following property:
\begin{enumerate}
	\item[(S2)] \(\a \c\) is locally bounded.
\end{enumerate}

Before we turn to our main application, we report a simple observation which might be of independent interest: Absolute continuity and singularity are invariant under time-changes. 
For an application of a related result to mathematical finance see \cite{doi:10.1111/j.1467-9965.2012.00530.x}.
\begin{corollary}\label{coro: inv}
	Let \(\f \colon \mathbb{R}^d \to (0, \infty)\) be a  Borel function locally bounded away from zero and infinity, \(x_0 \in \mathbb{R}^d\), \(\xp\) be the unique solution to the MP \((\a, \b, x_0)\), \(\xq\) be the unique solution to the MP \((\a, \b + \a \c, x_0)\), \(\xp^*\) be the unique solution to the MP \((\f^{-1} \a, \f^{-1} \b, x_0)\) and let \(\xq^*\) be the unique solution to the MP \((\f^{-1} \a, \f^{-1} (\b + \a \c), x_0)\). The following hold:
	\begin{enumerate}
		\item[\textup{(i)}] \(\xp \ll \xq\) if and only if \(\xp^* \ll \xq^*\).
		\item[\textup{(ii)}] \(\xp \perp \xq\) if and only if \(\xp^* \perp \xq^*\).
	\end{enumerate}
\end{corollary}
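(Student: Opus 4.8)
The plan is to represent each of the two pairs \((\xp, \xq)\) and \((\xp^*, \xq^*)\) as the image measure of the other under the maps \(Y\) and \(U\) supplied by Theorem \ref{theo: tc}, and then to invoke two elementary properties of image measures: forming images under a fixed measurable map preserves absolute continuity, while mutual singularity of the images forces mutual singularity of the original measures.

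First I would apply Theorem \ref{theo: tc} to the coefficient pair \((\a, \b)\) and also to the pair \((\a, \b + \a \c)\); Assumption (S2) guarantees that \(\b + \a \c\) is locally bounded, so the standing requirement (S1) is met for the second pair as well. The time change underlying the construction of \(Y\) and \(U\) is the pathwise operation governed by the additive functional \(t \mapsto \int_0^{t \wedge \theta} \f(\X_s)\, ds\), which involves \(\f\) but neither the drift nor the diffusion coefficient beyond that; hence both applications deliver the \emph{same} processes \(Y\) and \(U\). Part (iii) of the theorem, together with the uniqueness hypotheses of the corollary, then identifies \(\xp \circ Y^{-1} = \xp^*\) and \(\xq \circ Y^{-1} = \xq^*\). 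Next, invoking part (ii), namely \(U \circ Y = \X\) both \(\xp\)-a.s.\ and \(\xq\)-a.s., and recalling that \(\X\) is the identity map on \(\Omega\), one obtains for every \(B \in \mathcal{F}\) that \((\xp^* \circ U^{-1})(B) = \xp\big( (U \circ Y)^{-1}(B) \big) = \xp(B)\), and similarly for \(\xq\); thus \(\xp^* \circ U^{-1} = \xp\) and \(\xq^* \circ U^{-1} = \xq\), so that \((\xp, \xq)\) is the image of \((\xp^*, \xq^*)\) under \(U\), just as \((\xp^*, \xq^*)\) is the image of \((\xp, \xq)\) under \(Y\).

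It then remains to combine these identities with the following two facts, valid whenever \(\mu = \mu' \circ T^{-1}\) and \(\nu = \nu' \circ T^{-1}\) for a measurable map \(T\): (a) \(\mu' \ll \nu'\) implies \(\mu \ll \nu\), since a \(\nu\)-null set \(A\) has \(\nu'(T^{-1}(A)) = 0\), hence \(\mu'(T^{-1}(A)) = 0\), hence \(\mu(A) = 0\); and (b) \(\mu \perp \nu\) implies \(\mu' \perp \nu'\), since a set \(N\) with \(\mu(N) = 1\) and \(\nu(N) = 0\) pulls back to \(T^{-1}(N)\), which satisfies \(\mu'(T^{-1}(N)) = 1\) and \(\nu'(T^{-1}(N)) = 0\). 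Applying (a) with \(T = Y\) and then with \(T = U\) yields both implications of (i); applying (b) with \(T = U\) and then with \(T = Y\) yields both implications of (ii). I expect the only delicate point to be the assertion that a single pair \((Y, U)\) serves both martingale problems MP \((\a, \b, x_0)\) and MP \((\a, \b + \a \c, x_0)\) — this is exactly what ties \((\xp^*, \xq^*)\) to \((\xp, \xq)\) through one and the same map — and it rests on the drift-independence of the time change built in the proof of Theorem \ref{theo: tc}; everything else is bookkeeping.
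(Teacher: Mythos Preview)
Your proposal is correct and follows essentially the same approach as the paper: both arguments exploit that the time-change maps \(Y\) (and \(U\)) from Theorem~\ref{theo: tc} are defined pathwise from \(\f\) alone, hence serve simultaneously for the pairs \((\xp,\xq)\) and \((\xp^*,\xq^*)\), and then transport absolute continuity and singularity via these pushforwards. The only cosmetic difference is that you spell out the converse direction using \(U\) and the identity \(U\circ Y = X\), whereas the paper simply writes ``by symmetry'' (which amounts to reapplying Theorem~\ref{theo: tc} with \(\f^{-1}\) in place of \(\f\)).
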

\begin{proof}
	Assume that \(\xp \ll \xq\). For contradiction, let \(A \in \mathcal{F}\) be such that \(\xq^*(A) = 0\) and \(\xp^*(A) > 0\). Set \(B \triangleq \{Y \in A\}\), where \(Y\) is as in Theorem \ref{theo: tc}. Then, Theorem \ref{theo: tc} yields that
	\(
	\xq (B) = \xq^*(A) = 0\) and \(\xp (B) = \xp^* (A) > 0. 
	\)
	This is a contradiction and we conclude that \(\xp^* \ll \xq^*\). The converse implication in part (i) follows by symmetry. Part (ii) can be shown in the same manner.
\end{proof}

We assume the following:
\begin{enumerate}
	\item[(S3)] For every \(x_0 \in \mathbb{R}^d\) there exists a unique solution \(\xp\) to the MP \((\a, \b, x_0)\) and a unique solution \(\xq\) to the MP \((\a, \b + \a \c, x_0)\). 
\end{enumerate}
Analytic conditions for (S3) are given by Proposition \ref{prop: EU} in Appendix \ref{app}.

Next, we introduce a non-negative local \(\xq\)-martingale which relates \(\xq\) and \(\xp\).  For this, we assume the following:
\begin{enumerate}
	\item[\textup{(S4)}] \(\langle \c, \a\c\rangle\) is locally bounded.
\end{enumerate}
We set
\[
\overline{\X}_{\cdot \wedge \theta_n} \triangleq \X_{\cdot \wedge \theta_n} - \int_0^{\cdot \wedge \theta_n} \big(\b(\X_s) + \a (\X_s) \c (\X_s) \big) ds.
\]
By definition of the martingale problem, \(\overline{\X}_{\cdot \wedge \theta_n}\) is a continuous \(\xq\)-martingale with quadratic variation process
\[
[\overline{\X}_{\cdot \wedge \theta_n}, \overline{\X}_{\cdot \wedge \theta_n}] = \int_0^{\cdot \wedge \theta_n} \a(\X_s) ds.
\]
By assumption (S4), the integral process \(\overline{Y}_{\cdot \wedge \theta_n} \triangleq \int_0^{\cdot \wedge \theta_n} \langle\c(\X_s), d \overline{\X}_s\rangle\) is well-defined as a continuous \(\xq\)-martingale starting at zero with quadratic variation process 
\[
[\overline{Y}_{\cdot \wedge \theta_n}, \overline{Y}_{\cdot \wedge \theta_n}] = \int_0^{\cdot \wedge \theta_n} \langle \c(\X_s), \a(\X_s) \c(\X_s)\rangle ds.
\]
\begin{lemma}
	The process 
	\begin{align}\label{eq: Z}
	Z_t \triangleq \begin{cases} \exp \big( - \int_0^t \langle \c(\X_s), d \overline{\X}_s\rangle - \frac{1}{2} \int_0^t \langle \c(\X_s), \a(\X_s) \c(\X_s)\rangle ds \big), & t < \theta,\\
	\liminf_{n \to \infty} Z_{\theta_n},  & t \geq \theta. \end{cases}
	\end{align}
	is a non-negative local \(\xq\)-martingale and \(\xq\)-a.s.  the terminal value \(Z_\infty \triangleq \lim_{t \to \infty} Z_t\) exists and is finite.
\end{lemma}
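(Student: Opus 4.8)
The plan is to establish the lemma by passing to the limit $n\to\infty$ in the stopped processes $Z_{\cdot\wedge\theta_n}$; the genuine difficulty lies in controlling $Z$ near the explosion time $\theta$ and in making sense of it at and beyond $\theta$.

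First, I would observe that for each fixed $n\in\N$ the process $Z_{\cdot\wedge\theta_n}$ is the stochastic exponential of the continuous $\xq$-martingale $-\overline{Y}_{\cdot\wedge\theta_n}$, namely
\[
Z_{\cdot\wedge\theta_n}=\exp\Big(-\overline{Y}_{\cdot\wedge\theta_n}-\tfrac12\,[\overline{Y}_{\cdot\wedge\theta_n},\overline{Y}_{\cdot\wedge\theta_n}]\Big),
\]
using that $\X$ has continuous paths on $[0,\theta)$ and that $\theta_n<\theta$ on $\{\theta<\infty\}$, so that the second branch in \eqref{eq: Z} never intervenes before time $\theta_n$. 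Consequently $Z_{\cdot\wedge\theta_n}$ is a continuous nonnegative local $\xq$-martingale, hence, by Fatou's lemma, a $\xq$-supermartingale with $Z_0=1$.

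Next I would rule out a blow-up of $Z$. Doob's maximal inequality for the nonnegative supermartingale $Z_{\cdot\wedge\theta_n}$ gives $\lambda\,\xq(\sup_{t\geq0}Z_{t\wedge\theta_n}\geq\lambda)\leq1$ for all $\lambda>0$, and since $\sup_{t\geq0}Z_{t\wedge\theta_n}$ increases to $\sup_{0\leq t<\theta}Z_t$ as $n\to\infty$, it follows that $\xq(\sup_{0\leq t<\theta}Z_t>\lambda)\leq\lambda^{-1}$ for every $\lambda>0$, so that $\sup_{0\leq t<\theta}Z_t<\infty$ $\xq$-a.s. Since $Z$ is constant on $[\theta,\infty)$ with value $\liminf_n Z_{\theta_n}\leq\sup_{0\leq t<\theta}Z_t$, we also obtain $\sup_{t\geq0}Z_t<\infty$ $\xq$-a.s., and in particular the terminal value $\liminf_n Z_{\theta_n}$ appearing in \eqref{eq: Z} is $\xq$-a.s.\ finite. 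I would then put $\tau_k\triangleq\inf\{t\geq0:Z_t\geq k\}$, so that $\tau_k\uparrow\infty$ $\xq$-a.s.\ by the bound just obtained, and check that $Z_{\cdot\wedge\tau_k}$ is a $\xq$-martingale for every $k$: for each $n$ the process $Z_{\cdot\wedge\tau_k\wedge\theta_n}$ is a continuous local $\xq$-martingale bounded by $k$ (on $[0,\theta_n]$ the paths of $Z$ are continuous and $Z$ is stopped when it first reaches $k$), hence a true $\xq$-martingale, and letting $n\to\infty$ with dominated convergence for conditional expectations (dominating constant $k$) transfers the martingale property to $Z_{\cdot\wedge\tau_k}$. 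Thus $Z$ is a nonnegative local $\xq$-martingale, hence (Fatou's lemma along $(\tau_k)$) a $\xq$-supermartingale, and the supermartingale convergence theorem then yields that $Z_\infty=\lim_{t\to\infty}Z_t$ exists and is finite $\xq$-a.s.

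The hard part is the behaviour of $Z$ at the explosion time. Ruling out a blow-up of $Z$ as $t\uparrow\theta$ is exactly what the Doob estimate accomplishes; and to justify the convergence $Z_{t\wedge\tau_k\wedge\theta_n}\to Z_{t\wedge\tau_k}$ needed for times $t\geq\theta$ one uses that $Z_{\theta_n}\to\liminf_m Z_{\theta_m}$ $\xq$-a.s. The cleanest way to obtain this is to observe, via a Dambis--Dubins--Schwarz time change of $\overline{Y}$, that $\lim_{t\uparrow\theta}Z_t$ exists $\xq$-a.s.; it equals $0$ on $\{\int_0^\theta\langle\c(\X_s),\a(\X_s)\c(\X_s)\rangle\,ds=\infty\}$ and the finite convergent limit otherwise, so that $Z$ in fact has $\xq$-a.s.\ continuous paths. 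Everything beyond these points is routine stochastic calculus.
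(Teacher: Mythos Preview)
Your argument is correct and is essentially a detailed unpacking of the standard construction the paper defers to \cite[Lemma~12.43]{J79}: one shows each $Z_{\cdot\wedge\theta_n}$ is a nonnegative local martingale (hence supermartingale), establishes that $\lim_{n}Z_{\theta_n}$ exists so that $Z$ extends continuously across $\theta$, and then localizes with $(\tau_k)$ to obtain the local martingale property on $[0,\infty)$. One minor remark: the Dambis--Dubins--Schwarz step you invoke is logically prior to the dominated-convergence passage $Z_{t\wedge\tau_k\wedge\theta_n}\to Z_{t\wedge\tau_k}$, so it should be placed earlier; moreover it can be replaced by the lighter observation that $(Z_{\theta_n})_{n\in\N}$ is a nonnegative $(\mathcal F_{\theta_n})_n$-supermartingale (optional sampling applied to $Z_{\cdot\wedge\theta_n}$), whose almost-sure limit therefore exists by supermartingale convergence---your DDS route is not wrong, just heavier than necessary, though it does yield the extra information $Z_{\theta-}=0$ on $\{A_\theta=\infty\}$.
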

\begin{proof}
This follows similar to the proof of \cite[Lemma 12.43]{J79}. 
\end{proof}

As in the introduction, we set
\[
A_\theta \triangleq \int_0^\theta \langle \c (X_s), \a (X_s) \c (X_s) \rangle ds.
\]
Part (i) of the next proposition is a version of \cite[Corollary 5.1]{criensglau2018} (see also \cite[Theorem 3.3]{RufSDE}) and part (ii) is an extension of \cite[Theorem 1 (i)]{BENARI2005179} to a non-conservative setting. The proofs are similar and omitted.
\begin{proposition}\label{prop: ac sin}
	\begin{enumerate}
		\item[\textup{(i)}] The following are equivalent:
			\begin{enumerate}
			\item[\textup{(a)}] \(\xp \ll \xq\) with \(\frac{d\xp}{d\xq} = Z_\infty\).
			\item[\textup{(b)}] \(Z\) is a uniformly integrable \(\xq\)-martingale.
			\item[\textup{(c)}] \(\xp(A_\theta < \infty) = 1\).
		\end{enumerate}
		\item[\textup{(ii)}] The following are equivalent:
			\begin{enumerate}
			\item[\textup{(a)}] \(\xp \perp \xq\).
			\item[\textup{(b)}] \(\xp(A_\theta = \infty) = 1\).
		\end{enumerate}
	\end{enumerate}
\end{proposition}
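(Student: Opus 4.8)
The plan is to reconstruct the classical proof: a Girsanov change of measure together with the uniqueness assumption (S3), carried out along a localizing sequence on which $Z$ is a genuine uniformly integrable martingale, followed by a passage to the terminal $\sigma$-field via martingale convergence. Concretely, I would first fix the localizing sequence $\tau_n \triangleq \theta_n \wedge \inf\{t \in \mathbb{R}_+ \colon A_t \geq n\}$, noting that $\tau_n \uparrow \theta$ and $A_{\tau_n} \leq n$. Then the continuous $\xq$-local martingale $\overline{Y}$ (which has quadratic variation $A$ up to $\theta$) stopped at $\tau_n$ has quadratic variation bounded by $n$, so by Novikov's criterion $Z^{\tau_n} = \mathcal{E}(-\overline{Y}^{\tau_n})$ is a uniformly integrable $\xq$-martingale and $Z_{\tau_n} > 0$ $\xq$-a.s. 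Setting $R^n \triangleq Z_{\tau_n} \cdot \xq$, Girsanov's theorem shows that under $R^n$ the stopped coordinate process $X^{\tau_n}$ has drift coefficient $\b$ and diffusion coefficient $\a$, i.e.\ $R^n$ solves the MP $(\a,\b,x_0)$ stopped at $\tau_n$. Using (S3) and the resulting uniqueness up to stopping times, the consistent family $(R^n)_n$ extends to a probability measure on $\bigvee_n\mathcal{F}_{\tau_n}$ that makes $X^{\tau_n}$ a stopped $(\a,\b)$-diffusion for every $n$, hence coincides there with $\xp$. This gives $\xp|_{\mathcal{F}_{\tau_n}} = Z_{\tau_n}\cdot\xq|_{\mathcal{F}_{\tau_n}}$ for all $n$, with $\xp$ and $\xq$ equivalent on each $\mathcal{F}_{\tau_n}$ (densities $Z_{\tau_n}$ and $1/Z_{\tau_n}$); and since $\bigvee_n\mathcal{F}_{\tau_n} = \mathcal{F}$ modulo $\xp$- and $\xq$-null sets (paths are absorbed at $\Delta$ at $\theta$), global statements on $\mathcal{F}$ can be read off from this.

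The heart of the argument is the identity $\xp(A_\theta < \infty) = E^{\xq}[Z_\infty]$. To prove it, I would fix $N \in \mathbb{N}$, put $\beta_N \triangleq \inf\{t \colon A_t \geq N\}$, and use the domination $0 \leq Z_{\tau_n}\indik_{\{A_{\tau_n} \leq N\}} \leq Z_{\tau_n \wedge \beta_N}$, where $(Z_{\tau_n \wedge \beta_N})_n$ is $\xq$-uniformly integrable as a family of conditional expectations of the integrable variable $Z_{\beta_N}$. Since $\overline{Y}$ is a continuous $\xq$-local martingale with $\langle\overline{Y}\rangle = A$ up to $\theta$, the Dambis--Dubins--Schwarz time change gives $Z_{\tau_n} \to Z_\infty$ $\xq$-a.s., with $\{Z_\infty > 0\} = \{A_\theta < \infty\}$ $\xq$-a.s. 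Passing to the limit in $\xp(A_{\tau_n} \leq N) = E^{\xq}[Z_{\tau_n}\indik_{\{A_{\tau_n} \leq N\}}]$, first $n \to \infty$ by dominated convergence and then $N \to \infty$ by monotone convergence (using $Z_\infty\indik_{\{A_\theta = \infty\}} = 0$), yields the identity.

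Part (i) then follows quickly. The equivalence of (b) and (c) holds because a nonnegative local martingale is a uniformly integrable martingale iff its terminal value has expectation one, which by the identity means $\xp(A_\theta < \infty) = 1$. For (b)$\,\Rightarrow\,$(a): if $Z$ is uniformly integrable, $Z_\infty\cdot\xq$ is a probability measure which by the Girsanov computation and (S3) must equal $\xp$ on $\bigvee_n\mathcal{F}_{\tau_n}$, hence on $\mathcal{F}$, so $\frac{d\xp}{d\xq} = Z_\infty$. For (a)$\,\Rightarrow\,$(b): if $\xp \ll \xq$, then $Z_{\tau_n} = E^{\xq}[\frac{d\xp}{d\xq} \,|\, \mathcal{F}_{\tau_n}]$ is $\xq$-uniformly integrable, so $E^{\xq}[Z_\infty] = 1$ and $Z$ is a uniformly integrable martingale. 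For part (ii), I would use that $1/Z^{\tau_n}$ is the $\xp$-density process of $\xq$ on $\mathcal{F}_{\tau_n}$, hence a nonnegative $\xp$-martingale converging $\xp$-a.s.; under $\xp$ the process $\overline{Y}$ acquires the drift $-A$, and a second Dambis--Dubins--Schwarz argument gives $1/Z_{\tau_n} \to 0$ $\xp$-a.s.\ exactly on $\{A_\theta = \infty\}$. Since $\xp \perp \xq$ on $\mathcal{F}$ (equivalently on $\bigvee_n\mathcal{F}_{\tau_n}$) holds iff the $\xp$-density of $\xq$ there vanishes $\xp$-a.s., this is precisely $\xp(A_\theta = \infty) = 1$.

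The step I expect to be the main obstacle is the bookkeeping in the passage from the $\mathcal{F}_{\tau_n}$-local statements to the global ones on $\mathcal{F}$: the density process $Z^{\tau_n}$ generally fails to close to a uniformly integrable martingale, so one must control the mass $1 - E^{\xq}[Z_\infty]$ lost in the limit and locate the supports of the singular parts of $\xp$ and $\xq$, invoking the correct uniqueness statement on $\bigvee_n\mathcal{F}_{\tau_n}$. This is intertwined with the non-conservative setting, in which $A_\theta$ may diverge either because $\theta < \infty$ and $\langle\c,\a\c\rangle$ blows up as $X$ explodes, or because $\theta = \infty$ and the perpetual integral diverges over the infinite horizon; the time-change representation of $\overline{Y}$ handles both cases simultaneously, which is what allows the conservative argument of \cite[Theorem 1 (i)]{BENARI2005179} to carry over to part (ii).
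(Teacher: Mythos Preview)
The paper does not give a proof: it states that part (i) is a version of \cite[Corollary 5.1]{criensglau2018} and part (ii) an extension of \cite[Theorem 1(i)]{BENARI2005179} to the non-conservative setting, and omits the details. Your sketch is correct and is precisely the standard argument carried out in those references---localize along $\tau_n$, use Novikov and Girsanov together with (S3) to obtain $\xp|_{\mathcal F_{\tau_n}}=Z_{\tau_n}\cdot\xq|_{\mathcal F_{\tau_n}}$, and pass to $\mathcal F=\bigvee_n\mathcal F_{\tau_n}$ via the Dambis--Dubins--Schwarz representation of $\overline Y$ and (super)martingale convergence; the only cosmetic point is that you need not ``extend'' the family $(R^n)$, since $\xp$ is already given and one simply checks $R^n=\xp$ on $\mathcal F_{\tau_n}$ by local uniqueness.
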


From now on we extend (S4) and assume the following:
\begin{enumerate}
	\item[(S5)] \(\langle \c, \a \c\rangle\) is locally bounded away from zero.
\end{enumerate}

Due to Theorem \ref{theo: tc} there exists a unique solution \(\oQ\) to the time-changed MP \((\langle \c, \a \c\rangle^{-1} \a, \langle \c, \a \c\rangle^{-1} \b, x_0)\) and 
\[
\xp (A_\theta < \infty) = \oQ(\theta < \infty).
\]
This observation relates Proposition \ref{prop: ac sin} to \(\oQ\):
\begin{corollary}\label{theo: abstract main}
	\begin{enumerate}
		\item[\textup{(i)}] \textup{(i.a) -- (i.c)} in Proposition \ref{prop: ac sin} are equivalent to the following:
		\begin{enumerate}\item[\textup{(d)}] \(\oQ(\theta < \infty) = 1\).
			\end{enumerate}
				\item[\textup{(ii)}] \textup{(ii.a)} and \textup{(ii.b)} in Proposition \ref{prop: ac sin} are equivalent to the following:
					\begin{enumerate}\item[\textup{(c)}] \(\oQ(\theta = \infty) = 1\).
				\end{enumerate}
	\end{enumerate}
\end{corollary}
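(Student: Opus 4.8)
The plan is to read off Corollary \ref{theo: abstract main} directly from Theorem \ref{theo: tc} and Proposition \ref{prop: ac sin}; no genuinely new argument is needed. First I would check that the function \(\f \triangleq \langle \c, \a \c\rangle\) satisfies the hypotheses of Theorem \ref{theo: tc}: it is Borel and non-negative (as \(\a\) is \(\mathbb{S}^{d\times d}\)-valued), it is locally bounded away from zero by assumption (S5) --- in particular strictly positive --- and it is locally bounded, i.e. locally bounded away from infinity, by assumption (S4). Since (S3) provides the unique solution \(\xp\) of the MP \((\a, \b, x_0)\), Theorem \ref{theo: tc} applies with this \(\f\) and \(P_{x_0} = \xp\).

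Next I would identify the measure \(\oQ\) with the push-forward \(\xp \circ Y^{-1}\). By Theorem \ref{theo: tc}(iii), \(\xp \circ Y^{-1}\) solves the time-changed MP \((\f^{-1}\a, \f^{-1}\b, x_0)\), and by the final clause of Theorem \ref{theo: tc} together with the uniqueness of \(\xp\) guaranteed by (S3), this is the \emph{unique} solution; hence \(\oQ = \xp \circ Y^{-1}\), as already noted in the paragraph preceding the corollary. Now I would apply the explosion identity \eqref{eq: exp id} with \(\f = \langle\c,\a\c\rangle\) to two Borel sets. Taking \(A = [0,\infty)\) gives
\[
\xp(A_\theta < \infty) = \xp\Big( \int_0^\theta \langle \c(\X_s), \a(\X_s)\c(\X_s)\rangle\, ds < \infty \Big) = \oQ(\theta < \infty),
\]
so condition (i.c) of Proposition \ref{prop: ac sin} is equivalent to (d); combined with the equivalence of (i.a)--(i.c) in Proposition \ref{prop: ac sin}, this yields part (i). Taking instead \(A = \{\infty\}\) gives \(\xp(A_\theta = \infty) = \oQ(\theta = \infty)\), so (ii.b) is equivalent to (c), and together with the equivalence of (ii.a) and (ii.b) in Proposition \ref{prop: ac sin} this yields part (ii).

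Since all the analytic substance is contained in Theorem \ref{theo: tc} (proved in Section \ref{sec: pf main}) and in Proposition \ref{prop: ac sin}, there is no real obstacle in this proof. The only points deserving a line of justification are the verification that \(\langle\c,\a\c\rangle\) meets the standing hypotheses of Theorem \ref{theo: tc} and the observation that the uniqueness of \(\oQ\) is inherited from the uniqueness of \(\xp\) through the ``existence and uniqueness hold simultaneously'' assertion of Theorem \ref{theo: tc}.
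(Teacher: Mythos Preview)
Your proposal is correct and follows exactly the paper's approach: the paper states the identity \(\xp(A_\theta < \infty) = \oQ(\theta < \infty)\) in the paragraph immediately preceding the corollary as a direct consequence of Theorem~\ref{theo: tc} with \(\f = \langle \c,\a\c\rangle\), and then reads off the corollary from Proposition~\ref{prop: ac sin} without further argument. Your write-up simply makes explicit the verification of (S4)--(S5) and the two choices \(A = [0,\infty)\) and \(A = \{\infty\}\) in \eqref{eq: exp id}, which the paper leaves implicit.
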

\begin{remark}\label{rem: tmg}
	Due to \cite[Corollary 5.1]{criensglau2018}, the following are equivalent:
	\begin{enumerate}
		\item[\textup{(i)}] \(\xp \ll_\textup{loc} \xq\), i.e. \(\xp \ll \xq\) on \(\mathcal{F}_t\) for all \(t \in \mathbb{R}_+\).
		\item[\textup{(ii)}] \(Z\) is a \(\xq\)-martingale.
		\item[\textup{(iii)}] \(\xp\)-a.s. \(A_\theta < \infty\) on \(\{\theta < \infty\}\).
	\end{enumerate} 
	We note that if \(\xp\) is \emph{non-explosive}, then \(\xp \ll_\textup{loc} \xq\) and \(Z\) is a \(\xq\)-martingale.
	In comparison, the absolute continuity \(\xp \ll \xq\) and the UI \(\xq\)-martingale property of \(Z\) are related to almost sure \emph{explosion} of \(\oQ\). 
\end{remark}

Let us shortly comment on the role played by the initial value.

\begin{lemma}\label{lem: MP}
	Suppose that \(\a\) is locally H\"older continuous and \(\langle \xi, \a(x)\xi\rangle > 0\) for all \(x \in \mathbb{R}^d\) and \(\xi \in \mathbb{R}^d \backslash \{0\}\). Then, the following hold:
	\begin{enumerate}
		\item[\textup{(i)}] 
		\(\xp (\theta = \infty) = 1\) holds for all \(x_0 \in \mathbb{R}^d\) if it holds for some \(x_0 \in \mathbb{R}^d\).
		\item[\textup{(ii)}]
				\(\xp (\theta < \infty) = 1\) holds for all \(x_0 \in \mathbb{R}^d\) if it holds for some \(x_0 \in \mathbb{R}^d\).
	\end{enumerate}
\end{lemma}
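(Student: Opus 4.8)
The plan is to reduce the statement to a known irreducibility property of non-degenerate diffusions with locally Hölder continuous coefficients. Under the stated hypotheses, for every $x_0 \in \mathbb{R}^d$ the martingale problem MP $(\a, \b, x_0)$ is well-posed (this is classical; see also Proposition \ref{prop: EU} in Appendix \ref{app}), so $\xp$ is a strong Markov family, and the family is strongly Feller and irreducible in the sense that for every nonempty open $G \subseteq \mathbb{R}^d$ and every $x \in \mathbb{R}^d$ one has $\xp(\sigma_G < \theta) > 0$, where $\sigma_G \triangleq \inf(t \colon \X_t \in G)$. I would state this as a lemma (or simply invoke \cite{pinsky1995positive}, e.g.\ the support theorem / irreducibility results there) and then run the standard ``explosion from one point propagates to all points'' argument.

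For part (ii), suppose $\xp[x_1](\theta < \infty) = 1$ for some $x_1$. Fix an arbitrary $x_0 \in \mathbb{R}^d$ and let $r > 0$ be such that $x_1$ lies in the open ball $B \triangleq \{\|x - x_1\| < r\}$. By irreducibility, $\xp(\sigma_B < \theta) > 0$; write $\sigma \triangleq \sigma_B$. On $\{\sigma < \theta\}$ the strong Markov property gives, for the conditional law of the shifted path, a solution to the martingale problem started from $\X_\sigma \in \overline{B}$. Here I would use a small localization: enlarge $B$ slightly to a ball $B'$ with $\overline{B} \subset B'$, and use that $\xp[y](\theta < \infty) = 1$ holds for every $y$ in the closure of $B$ — which requires knowing the conclusion on a neighborhood of $x_1$, not just at $x_1$ itself. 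This is the point where I expect the main obstacle, so let me instead argue directly: by the strong Markov property at $\sigma$,
\begin{align*}
\xp(\theta < \infty) \;\geq\; \xp\big(\{\sigma < \theta\} \cap \{\theta \circ \Theta_\sigma < \infty\}\big) \;=\; E^{\xp}\big[\indik_{\{\sigma < \theta\}}\, \P_{\X_\sigma}(\theta < \infty)\big],
\end{align*}
where $\Theta$ denotes the shift. So it suffices to show $\P_y(\theta < \infty) = 1$ for $y$ in a full-measure set of exit locations; and by the same irreducibility argument applied in reverse (reaching a small ball around $x_1$ from $y$, then iterating), one reduces everything to: the set $\{x \colon \xp[x](\theta < \infty) = 1\}$ is nonempty, open, and closed, hence all of $\mathbb{R}^d$. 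Openness and closedness follow from the strong Feller property together with the zero--one law $\xp[x](\theta < \infty) \in \{0,1\}$, which itself is a consequence of Blumenthal's $0$--$1$ law once one knows $x \mapsto \P_x(\theta < \infty)$ is harmonic for the killed semigroup — or, more simply, from the fact that under well-posedness and irreducibility the tail event $\{\theta < \infty\}$ cannot have intermediate probability.

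More concretely, here is the cleanest route I would write up. Let $h(x) \triangleq \xp[x](\theta < \infty)$. Using the strong Markov property at $\sigma_B$ for any ball $B$ and the continuity of paths, one shows $h$ is both lower and upper semicontinuous on $\mathbb{R}^d$ (strong Feller property of the killed semigroup applied to $h$), hence continuous, and satisfies $h(x) \geq E^{\xp[x]}[h(\X_{\sigma_B})\indik_{\{\sigma_B < \theta\}}]$ with equality by the tower property, so $h$ is ``harmonic'' for the process killed at $\partial B$. By the maximum principle for such harmonic functions under non-degeneracy (or the support theorem: from any $x$ one can with positive probability reach any neighborhood of any other point before explosion), $h$ can attain the value $1$ only if $h \equiv 1$: indeed if $h(x_*) = 1$ at some point and $h(x') < 1$ at another, pick a ball $B$ around $x_*$ on whose closure $h < 1$ somewhere reachable — contradiction with $h(x_*) = E^{\xp[x_*]}[h(\X_{\sigma_B})\indik_{\{\sigma_B<\theta\}} ] \le \xp[x_*](\sigma_B < \theta) \le 1$ being forced to equal $1$ while $\xp[x_*]$-a.s.\ $\X_{\sigma_B}$ visits points where $h < 1$ with positive probability. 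Thus $\{h = 1\}$ is either empty or all of $\mathbb{R}^d$, which is exactly (ii); part (i) follows by replacing $h$ with $g(x) \triangleq \xp[x](\theta = \infty) = 1 - h(x)$ and running the identical argument (or simply by taking complements, noting $h \in \{0,1\}$ everywhere once we know the level set $\{h=1\}$ is clopen and $h$ is continuous). The main obstacle, as indicated, is making the ``reach any point before explosion with positive probability'' statement rigorous in the possibly explosive setting; I would handle it by working on the stopped processes $\X_{\cdot \wedge \theta_n}$, where the coefficients are bounded and the classical support theorem of \cite{pinsky1995positive,SV} applies, and then letting $n \to \infty$.
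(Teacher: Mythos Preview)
Your approach is essentially the same as the paper's: show that \(h(x) \triangleq P_x(\theta < \infty)\) is harmonic and apply the maximum principle. The paper executes this in one line by citing \cite[Lemmata 1.2, 1.4]{BHATTACHARYA198295}, where precisely these two facts (harmonicity of \(h\) and the relevant maximum principle under local H\"older continuity and strict ellipticity) are proved; so your detour through irreducibility, the support theorem, and strong Feller arguments is unnecessary once you know that reference.

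One concrete slip to fix if you do write it out: you define \(\sigma_G = \inf(t \colon X_t \in G)\) as a \emph{hitting} time, but in your maximum-principle paragraph you take a ball \(B\) around \(x_*\) and reason about \(X_{\sigma_B}\) lying on \(\partial B\). With your definition \(\sigma_B = 0\) and \(X_{\sigma_B} = x_*\), which collapses the argument. You want the \emph{exit} time \(\tau_B = \inf(t \colon X_t \notin B)\); with that change (and the observation that \(\tau_B < \theta\) a.s.\ for bounded \(B\), which does need the ellipticity), the identity \(h(x_*) = E^{P_{x_*}}[h(X_{\tau_B})]\) and the strong maximum principle go through cleanly.
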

\begin{proof}
	This follows from the fact that \(x \mapsto P_x(\theta < \infty)\) is harmonic and the maximum principle, see \cite[Lemmata 1.2, 1.4]{BHATTACHARYA198295}.
\end{proof}

\begin{corollary}
	Suppose that \(\a\) satisfies the assumptions from Lemma \ref{lem: MP} and that \(\langle \c, \a \c\rangle\) is locally H\"older continuous. Then the following hold:
		\begin{enumerate}
		\item[\textup{(i)}] 
		\(\xp \ll \xq\) holds for all \(x_0 \in \mathbb{R}^d\) if it holds for some \(x_0 \in \mathbb{R}^d\).
		\item[\textup{(ii)}]
		\(\xp \perp \xq\) holds for all \(x_0 \in \mathbb{R}^d\) if it holds for some \(x_0 \in \mathbb{R}^d\).
	\end{enumerate}
\end{corollary}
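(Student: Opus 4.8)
The plan is to reduce both assertions to the explosion/non-explosion dichotomy for the associated time-changed diffusion and then to apply Lemma~\ref{lem: MP}. For $x \in \mathbb{R}^d$, write $\oP_{x}$ for the unique solution of the time-changed MP $(\widetilde{\a}, \widetilde{\b}, x)$, where $\widetilde{\a} \triangleq \langle \c, \a \c \rangle^{-1}\a$ and $\widetilde{\b} \triangleq \langle \c, \a \c \rangle^{-1}\b$; such a solution exists and is unique for every $x$ by (S3) together with Theorem~\ref{theo: tc} applied with $\f = \langle \c, \a \c \rangle$ (which is Borel and, by (S4) and (S5), locally bounded away from zero and from infinity). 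By Corollary~\ref{theo: abstract main}, for each $x_0 \in \mathbb{R}^d$ we have $\xp \ll \xq$ if and only if $\oP_{x_0}(\theta < \infty) = 1$, and $\xp \perp \xq$ if and only if $\oP_{x_0}(\theta = \infty) = 1$. Hence it suffices to prove that almost sure explosion of $\oP_{x}$, and likewise almost sure non-explosion of $\oP_{x}$, are properties of the starting point $x$ that hold either for all $x \in \mathbb{R}^d$ or for none.

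This is precisely the conclusion of Lemma~\ref{lem: MP} applied to the coefficient pair $(\widetilde{\a}, \widetilde{\b})$, once we check that $\widetilde{\a}$ satisfies the hypotheses of that lemma. First, $\widetilde{\a}$ is everywhere non-degenerate: for $x \in \mathbb{R}^d$ and $\xi \in \mathbb{R}^d \setminus \{0\}$,
\[
\langle \xi, \widetilde{\a}(x)\xi \rangle \;=\; \frac{\langle \xi, \a(x)\xi \rangle}{\langle \c(x), \a(x)\c(x) \rangle} \;>\; 0,
\]
since the numerator is positive by the assumptions on $\a$ from Lemma~\ref{lem: MP} and the denominator is positive and finite by (S4) and (S5). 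Second, $\widetilde{\a}$ is locally H\"older continuous: by (S5) the function $\langle \c, \a \c \rangle$ is locally bounded away from zero, and by hypothesis it is locally H\"older continuous, hence its reciprocal is locally H\"older continuous; multiplying by $\a$, which is locally H\"older continuous by hypothesis and locally bounded by (S1), preserves local H\"older continuity. (The drift $\widetilde{\b}$ is locally bounded by (S1) and (S5), so the MP $(\widetilde{\a}, \widetilde{\b}, x)$ is of the kind treated in Lemma~\ref{lem: MP}, whose proof only uses local H\"older continuity and local non-degeneracy of the diffusion coefficient, together with local boundedness of the drift.)

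With these two facts at hand, Lemma~\ref{lem: MP} gives that $\oP_{x}(\theta = \infty) = 1$ holds for all $x$ as soon as it holds for some $x$, and similarly for $\oP_{x}(\theta < \infty) = 1$. Feeding this back into the equivalences from Corollary~\ref{theo: abstract main} recorded above yields (i) and (ii).

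I do not expect a genuine obstacle here: the corollary is a bookkeeping combination of Corollary~\ref{theo: abstract main} and Lemma~\ref{lem: MP}. The only mildly technical point is the verification that $\widetilde{\a}$ inherits local H\"older continuity --- concretely, the elementary fact that the reciprocal of a locally H\"older function which is locally bounded away from zero is again locally H\"older, and that the product of two such functions is locally H\"older --- which is standard and uniform on compact sets.
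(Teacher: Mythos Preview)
Your proposal is correct and follows exactly the route the paper intends: the corollary is stated without proof precisely because it is the immediate combination of Corollary~\ref{theo: abstract main} (reducing $\xp \ll \xq$ and $\xp \perp \xq$ to a.s.\ explosion/non-explosion of $\oP_{x_0}$) with Lemma~\ref{lem: MP} applied to the time-changed coefficients $(\widetilde{\a}, \widetilde{\b})$. Your verification that $\widetilde{\a} = \langle \c, \a\c\rangle^{-1}\a$ inherits local H\"older continuity and non-degeneracy is the only point that needs spelling out, and you have done so correctly.
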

For conservative martingale problems this observation has been reported in \cite[Corollary 1]{BENARI2005179}.

Many analytic conditions for almost sure explosion and non-explosion of martingale problems are known, see, e.g., \cite{mckean1969stochastic, pinsky1995positive,SV}.
Due to Corollary \ref{theo: abstract main}, these lead to analytic conditions for \(\xp \ll \xq\) and \(\xp \perp \xq\).

For the one-dimensional case an analytic characterization of \(\oQ(\theta  < \infty) \in \{0, 1\}\) is given in \cite[Theorem 5.5.29, Proposition 5.5.32]{KaraShre}. Together with Corollary \ref{theo: abstract main} it leads to the integral tests in \cite[Corollaries 5.1, 5.3]{Cherny2006} and \cite[Theorem 2.3]{MU(2012)}. 
Our time-change argument requires that \(\c \not = 0\), which is not needed in \cite{Cherny2006,MU(2012)}. In return, our approach is robust w.r.t. the dimension.

In the following section we present three applications of Theorem \ref{theo: tc} and Corollary \ref{theo: abstract main}. First, we derive a Khasminskii-type integral test for absolute continuity and singularity, second we derive a Feller-type integral test for explosion of a multidimensional time-changed Brownian motion, and third we outline an application of Corollary \ref{theo: abstract main} to mathematical finance in which \(\c \not = 0\) is usually naturally satisfied.

\section{Three Applications}\label{sec: appl}
\subsection{A Khasminskii-Test for Absolute Continuity/Singularity}
In this section we assume that (S1) -- (S5) from Section \ref{sec: MR} hold. We now formulate analytic conditions for \(\xp \ll \xq\) and \(\xp \perp \xq\).
\begin{condition}\label{cond: eq}
	There exist continuous functions \(B \colon [\frac{1}{2}, \infty) \to \mathbb{R}\) and \(A \colon [\frac{1}{2}, \infty) \to (0, \infty)\) such that for all \(x \in \mathbb{R}^d \colon  \|x\| \geq 1\)
	\begin{align*}
	A \Big( \frac{\|x\|^2}{2} \Big) &\leq \frac{\langle x,\a (x) x \rangle}{\langle \c(x), \a (x) \c(x)\rangle},\\
	\langle x, \a (x) x \rangle B \Big( \frac{\|x\|^2}{2} \Big) &\leq  \textup{tr} (\a (x) ) + 2 \langle x, \b (x) \rangle,  
	\end{align*}
	and 
	\[
	\int_{\frac{1}{2}}^\infty \frac{1}{C(z)} \int_{\frac{1}{2}}^z \frac{C(u)}{A(u)} dudz < \infty, 
	\]
	where
	\[
	C(z) \triangleq \exp \Big(\int_1^z B(u) du\Big).
	\]
\end{condition}
\begin{condition}\label{cond: non eq}
There exists an \(R> 0\) and continuous functions \(B \colon [R, \infty) \to \mathbb{R}\) and \(A \colon [R, \infty) \to (0, \infty)\) such that for all \(x \in \mathbb{R}^d \colon \|x\| \geq \sqrt{2R}\)
\begin{align*}
A \Big( \frac{\|x\|^2}{2} \Big) &\geq \frac{\langle x,\a (x) x \rangle}{\langle \c(x),\a (x) \c(x)\rangle},\\
\langle x, \a(x) x \rangle B \Big( \frac{\|x\|^2}{2} \Big) &\geq  \textup{tr} ( \a (x) ) + 2 \langle x, \b (x) \rangle,  
\end{align*}
and 
\[
\int_R^\infty \frac{1}{C(z)} \int_R^z \frac{C(u)}{A(u)} dudz = \infty, 
\]
where
\[
C(z) \triangleq \exp \Big(\int_R^z B(u) du\Big).
\]
\end{condition}
\begin{corollary}\label{theo}
	\begin{enumerate}
		\item[\textup{(i)}] Suppose that Condition \ref{cond: eq} holds.  Then, \(\xp \ll \xq\) with \(\frac{d \xp}{d \xq} = Z_\infty\). In particular, \(Z\) is a uniformly integrable \(\xq\)-martingale.
		\item[\textup{(ii)}] Suppose that  Conditions \ref{cond: non eq} holds.  Then, \(\xp \perp \xq\) and \(Z\) is no uniformly integrable \(\xq\)-martingale.
	\end{enumerate}
\end{corollary}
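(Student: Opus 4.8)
The plan is to read both assertions off Corollary~\ref{theo: abstract main}: by that corollary, $\xp\ll\xq$ with $\tfrac{d\xp}{d\xq}=Z_\infty$ (equivalently, $Z$ is a uniformly integrable $\xq$-martingale) holds iff $\oQ(\theta<\infty)=1$, while $\xp\perp\xq$ holds iff $\oQ(\theta=\infty)=1$, where $\oQ$ is the unique solution of the time-changed problem $\mathrm{MP}(\tilde\a,\tilde\b,x_0)$ with $\tilde\a\triangleq\langle\c,\a\c\rangle^{-1}\a$ and $\tilde\b\triangleq\langle\c,\a\c\rangle^{-1}\b$. So it suffices to show that Condition~\ref{cond: eq} forces $\oQ$ to be almost surely explosive and that Condition~\ref{cond: non eq} forces $\oQ$ to be conservative, i.e. to recognise the two conditions as Khasminskii-type integral tests for $\oQ$. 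I write $\tilde L\triangleq\tfrac12\tr(\tilde\a\nabla^2)+\langle\tilde\b,\nabla\rangle$ for the generator of $\oQ$ and use the radial coordinate $\rho(x)\triangleq\tfrac12\|x\|^2$; since $\nabla\rho(x)=x$ and $\nabla^2\rho(x)=I$, one has $\tilde L\rho(x)=\tfrac{1}{2\langle\c,\a\c\rangle}(\tr\a(x)+2\langle x,\b(x)\rangle)$ and the quadratic variation of $\rho(\X)$ under $\oQ$ grows at rate $\langle x,\tilde\a(x)x\rangle=\langle x,\a(x)x\rangle/\langle\c,\a\c\rangle$ --- exactly the two quantities estimated in Conditions~\ref{cond: eq} and~\ref{cond: non eq} (note $\|x\|\ge1\Leftrightarrow\rho\ge\tfrac12$ and $\|x\|\ge\sqrt{2R}\Leftrightarrow\rho\ge R$).

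For part~(ii) I would compare $\rho(\X)$ under $\oQ$ with the one-dimensional diffusion with generator $\mathcal L_0\triangleq\tfrac12A(z)\partial_z^2+\tfrac12A(z)B(z)\partial_z$ on $[R,\infty)$. The function $u(z)\triangleq\int_R^z C(x)^{-1}\!\int_R^x 2C(y)A(y)^{-1}\,dy\,dx$ solves $\mathcal L_0 u=1$ with $u(R)=u'(R)=0$ (because $u''=-Bu'+2/A$), and the divergence integral in Condition~\ref{cond: non eq} says precisely that $u(z)\to\infty$ as $z\to\infty$. Setting $w\triangleq u\circ\rho$ on $\{\|x\|\ge\sqrt{2R}\}$ and extending $w$ to a nonnegative function that is $C^2$ on the remaining ball, the inequalities of Condition~\ref{cond: non eq} together with $u'\ge0$ give $\tilde Lw(x)=u'(\rho)\tilde L\rho(x)+\tfrac12u''(\rho)\langle x,\tilde\a x\rangle\le\tfrac12\langle x,\tilde\a x\rangle(Bu'+u'')(\rho)=\langle x,\tilde\a x\rangle/A(\rho)\le1$ for $\|x\|\ge\sqrt{2R}$, hence $\tilde Lw\le\kappa$ globally for a constant $\kappa$. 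Then $w(\X_{\cdot\wedge\theta_n})-\kappa(\cdot\wedge\theta_n)$ is a $\oQ$-supermartingale, so $\oQ(\theta_n\le t)\inf_{\|x\|=n}w(x)\le w(x_0)+\kappa t$; since $\inf_{\|x\|=n}w(x)=u(n^2/2)\to\infty$, this forces $\oQ(\theta\le t)=0$ for all $t$, i.e. $\oQ$ is conservative, and part~(ii) follows from Corollary~\ref{theo: abstract main}(ii) together with Proposition~\ref{prop: ac sin}(i) for the failure of the uniformly integrable martingale property.

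For part~(i) I would use two radial functions built from $\mathcal L_0$ on $[\tfrac12,\infty)$. First the scale function $s(z)\triangleq\int_{1/2}^z C(u)^{-1}\,du$, which satisfies $s''+Bs'=0$ and, because the inner integral in Condition~\ref{cond: eq} is bounded below, has $s(\infty)<\infty$. The function $\psi\triangleq s\circ\rho$ on $\{\|x\|\ge1\}$, extended by $0$ on $\{\|x\|<1\}$, satisfies $\tilde L\psi\ge0$ on $\{\|x\|>1\}$ (from the drift inequality in Condition~\ref{cond: eq} and $s'\ge0$) and has a convex kink across $\{\|x\|=1\}$, so by It\^o--Tanaka $\psi(\X_{\cdot\wedge\theta_n})$ is a bounded $\oQ$-submartingale; optional stopping at the return time $\tau\triangleq\inf\{t:\|\X_t\|\le1\}$ gives $\oQ(\theta_n<\tau)\ge s(\rho(x_0))/s(n^2/2)$, whence $\oQ\big(\bigcap_n\{\theta_n<\tau\}\big)\ge s(\rho(x_0))/s(\infty)$ when $\|x_0\|>1$. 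Second, with $u$ the solution of $\mathcal L_0 u=1$ normalised at $\tfrac12$ --- so that $u(\infty)<\infty$ is exactly the finiteness integral of Condition~\ref{cond: eq} --- the function $w\triangleq u\circ\rho$ extended by $0$ has $\tilde Lw\ge1$ on $\{\|x\|>1\}$, hence $w(\X_{\cdot\wedge\theta_n})-\int_0^{\cdot\wedge\theta_n}\mathbf 1_{\{\|\X_s\|>1\}}\,ds$ is a submartingale and $E^{\oQ}\big[\int_0^\theta\mathbf 1_{\{\|\X_s\|>1\}}\,ds\big]\le u(\infty)<\infty$. On $\bigcap_n\{\theta_n<\tau\}$ the process stays in $\{\|x\|>1\}$ up to $\theta$, so there $\int_0^\theta\mathbf 1_{\{\|\X_s\|>1\}}\,ds=\theta$; combined with the last estimate this yields $\theta<\infty$ on that event, hence $\oQ(\theta<\infty)\ge s(\rho(x_0))/s(\infty)>0$. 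Since this lower bound tends to $1$ as $\|x_0\|\to\infty$, the strong Markov property reduces the problem to showing that $\oQ$ cannot remain bounded forever, after which a.s. explosion of $\oQ$ follows and Corollary~\ref{theo: abstract main}(i) gives $\xp\ll\xq$ with $\tfrac{d\xp}{d\xq}=Z_\infty$ and the uniformly integrable $\xq$-martingale property of $Z$.

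The hard part will be this last step of~(i): upgrading the Feller-type positive-probability estimate to almost sure explosion of $\oQ$. Condition~\ref{cond: eq} constrains $\oQ$ only on $\{\|x\|\ge1\}$, so one must separately exclude that $\oQ$ is trapped inside the unit ball; here the non-degeneracy of the radial component outside the unit ball ($A>0$, preventing $\rho(\X)$ from being pinned in an annulus) together with the occupation-time estimate above, or else a classical Khasminskii/McKean almost-sure-explosion criterion for the comparison radial diffusion, has to be invoked. The remaining work is routine: mollifying the radial test functions so that It\^o's formula applies across $\{\|x\|=1\}$ and $\{\|x\|=\sqrt{2R}\}$, tracking the nonnegative local-time contributions of the convex kinks, verifying the elementary identities for $u$ and $s$, and the (super/sub)martingale convergence and optional-stopping bookkeeping.
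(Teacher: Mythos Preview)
Your reduction is exactly the one the paper uses: via Corollary~\ref{theo: abstract main} everything comes down to showing that Condition~\ref{cond: eq} forces $\oQ(\theta<\infty)=1$ and Condition~\ref{cond: non eq} forces $\oQ(\theta=\infty)=1$ for the time-changed diffusion with coefficients $\langle\c,\a\c\rangle^{-1}\a$ and $\langle\c,\a\c\rangle^{-1}\b$. Your identification of the radial quantities $\tilde L\rho$ and $\langle x,\tilde\a x\rangle$ and of the role of the functions $A,B,C$ is correct.

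The difference is that the paper's proof is two lines: Conditions~\ref{cond: eq} and~\ref{cond: non eq} are, verbatim, the hypotheses of Stroock--Varadhan's Khasminskii tests \cite[Theorems~10.2.4 and~10.2.3]{SV} applied to the generator $\tilde L$, and those theorems already deliver $\oQ(\theta<\infty)=1$ and $\oQ(\theta=\infty)=1$ respectively. You are instead reproving those theorems from scratch. Your Lyapunov argument for~(ii) is the standard one and is fine. For~(i), however, the gap you flag is real and is precisely the substance of \cite[Theorem~10.2.4]{SV}: upgrading the positive-probability estimate $\oQ(\theta<\infty)\ge s(\rho(x_0))/s(\infty)$ to $\oQ(\theta<\infty)=1$ requires ruling out that the process lingers in $\{\|x\|\le 1\}$, where Condition~\ref{cond: eq} says nothing about the coefficients. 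Your suggested route (strong Markov plus ``$\oQ$ cannot remain bounded forever'') is not justified by the hypotheses you have written down; Stroock--Varadhan handle this with a more delicate stopping-time iteration combined with the scale/occupation estimates, and reproducing it here would dwarf the rest of the proof. The cleanest fix is simply to cite \cite[Theorems~10.2.3, 10.2.4]{SV} for the explosion/non-explosion of $\oQ$, as the paper does, and then invoke Corollary~\ref{theo: abstract main}.
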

\begin{proof}
	Due to \cite[Theorem 10.2.4]{SV}, Condition \ref{cond: eq} implies that \(\oQ(\theta < \infty) = 1\).
In case Condition \ref{cond: non eq} holds, \cite[Theorem 10.2.3]{SV} yields that \(\oQ(\theta = \infty) = 1\).
Now, all claims follow from Corollary \ref{theo: abstract main}. 
\end{proof}

\subsection{An Explosion-Test for Time-Changed Brownian Motion}
Let \(\g \colon \mathbb{R}^d \to (0, \infty)\) be a Borel function which is locally bounded away from zero and infinity.
Due to Theorem \ref{theo: tc}, for every \(x_0 \in \mathbb{R}^d\) there exists a unique solution \(\xp\) to the MP \((\g \textup{ Id}, 0, x_0)\).
In case \(d \leq 2\), it follows immediately from Theorem \ref{theo: tc} and \cite[Theorem 3.27]{morters_peres_2010} that \(\xp\) is non-explosive. 
We are interested in explosion properties of \(\xp\) in case  \(d \geq 3\), i.e. in the situation where Brownian motion is transient.

For the remainder of this section let \(d \geq 3\) and denote by \(\mathcal{W}_{x_0}\) the \(d\)-dimensional Wiener measure with initial value \(x_0\). Then, \cite[Theorems 3.32, 3.33]{morters_peres_2010} yield that
\[
E^{\mathcal{W}_{x_0}} \Big[ \int_0^\infty \frac{ds}{\g (X_s)} \Big] = \C_d\int_{\mathbb{R}^d} \frac{\|x - x_0\|^{2- d} dx}{\g(x)},
\]
for a dimension-dependent constant \(\C_d > 0\).
This observation and Theorem \ref{theo: tc} imply the following:
\begin{corollary}\label{coro: sure expl}
	If \(\int_{\mathbb{R}^d} \g^{-1}(x) \|x - x_0\|^{2 - d} dx < \infty\), then \(\xp(\theta < \infty) = 1\).
\end{corollary}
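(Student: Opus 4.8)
The plan is to combine the random time-change identity \eqref{eq: exp id} from Theorem \ref{theo: tc} with the given formula for the Wiener-expectation of the perpetual integral $\int_0^\infty \g(X_s)^{-1}\,ds$. Take $\f \triangleq 1/\g$, which is Borel and locally bounded away from zero and infinity since $\g$ is; then the MP $(\f^{-1}\a,\f^{-1}\b,x_0)$ with $\a=\mathrm{Id}$, $\b=0$ is exactly the MP $(\g\,\mathrm{Id},0,x_0)$ whose unique solution is $\xp$. The unique solution to the original MP $(\mathrm{Id},0,x_0)$ is the Wiener measure $\mathcal{W}_{x_0}$. Applying Theorem \ref{theo: tc} with this $\f$, part (iv) gives, for every Borel $A\subseteq[0,\infty]$,
\[
\mathcal{W}_{x_0}\Big(\int_0^\theta \tfrac{1}{\g(X_s)}\,ds \in A\Big) = \xp(\theta\in A).
\]
Since Brownian motion is non-explosive, $\mathcal{W}_{x_0}(\theta=\infty)=1$, so $\int_0^\theta \g(X_s)^{-1}\,ds = \int_0^\infty \g(X_s)^{-1}\,ds$ $\mathcal{W}_{x_0}$-a.s.

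The second step is to note that under the hypothesis $\int_{\mathbb{R}^d}\g^{-1}(x)\|x-x_0\|^{2-d}\,dx<\infty$, the displayed potential-theoretic formula
\[
E^{\mathcal{W}_{x_0}}\Big[\int_0^\infty \tfrac{ds}{\g(X_s)}\Big] = \C_d\int_{\mathbb{R}^d}\tfrac{\|x-x_0\|^{2-d}\,dx}{\g(x)}
\]
is finite. Hence the non-negative random variable $\int_0^\infty \g(X_s)^{-1}\,ds$ has finite $\mathcal{W}_{x_0}$-expectation, so it is $\mathcal{W}_{x_0}$-a.s. finite; that is, $\mathcal{W}_{x_0}\big(\int_0^\theta \g(X_s)^{-1}\,ds<\infty\big)=1$. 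Taking $A=[0,\infty)$ in the identity above then yields $\xp(\theta<\infty)=1$, which is the claim.

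There is essentially no obstacle here: the argument is a direct chaining of Theorem \ref{theo: tc}(iv), the non-explosiveness of Brownian motion, finiteness of the given Green-potential integral, and Markov's inequality. The only points requiring a word of care are checking that $\f=1/\g$ satisfies the hypotheses of Theorem \ref{theo: tc} (immediate from the assumptions on $\g$), that the uniqueness needed to identify $\xp$ as the time-change of $\mathcal{W}_{x_0}$ is supplied by the last sentence of Theorem \ref{theo: tc} (existence and uniqueness hold simultaneously for the two martingale problems, and $(\mathrm{Id},0,x_0)$ is well-posed), and that one may replace $\int_0^\theta$ by $\int_0^\infty$ under $\mathcal{W}_{x_0}$ since $\theta=\infty$ a.s.
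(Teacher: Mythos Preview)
Your proof is correct and follows essentially the same approach as the paper: the paper states the Green-potential formula for \(E^{\mathcal{W}_{x_0}}[\int_0^\infty \g^{-1}(X_s)\,ds]\) immediately before the corollary and then says ``This observation and Theorem \ref{theo: tc} imply the following,'' which is exactly the chain you spell out (take \(\f=\g^{-1}\), apply \eqref{eq: exp id} with the Wiener measure as the base solution, and use that finite expectation implies a.s.\ finiteness). The only superfluous remark is the mention of Markov's inequality---a nonnegative random variable with finite expectation is trivially a.s.\ finite.
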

By the standard linear growth condition for non-explosion, we have \(\xp (\theta = \infty) = 1\) in case
\[
\g (x) \leq \C(1 + \|x\|)^2, \quad x \in \mathbb{R}^d, \C> 0.
\]
The following corollary shows that in case \(\g\) is locally H\"older continuous and a least of quadratic growth the convergence criterion in Corollary \ref{coro: sure expl} is optimal.
\begin{corollary}\label{coro: no expl} 
	Suppose that \(\g\) is locally H\"older continuous and
	\begin{align}\label{eq: growth}
	\g (x) \geq \C (1 + \|x\|)^2, \qquad x \in \mathbb{R}^d, \C> 0.
	\end{align}
If \(\int_{\mathbb{R}^d} \g^{-1} (x) \|x\|^{2 - d} dx = \infty\), then \(\xp (\theta = \infty) = 1\).
\end{corollary}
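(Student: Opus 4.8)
The plan is to reinterpret the perpetual integral $\int_0^\infty \g(X_s)^{-1}\,ds$ along Brownian paths as the functional $A_\theta$ attached to a suitable \emph{pair} of diffusions --- one of which is $\mathcal{W}_{x_0}$ --- and then to feed this into the singularity test for Fuchsian diffusions of \cite{BENARI2005179}.

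First I would fix the auxiliary drift. Let $\c \colon \mathbb{R}^d \to \mathbb{R}^d$ be a bounded Borel function with $\|\c(x)\|^2 = \g(x)^{-1}$ for all $x$; to stay within the Fuchsian framework of \cite{BENARI2005179} one may take it radial, e.g.\ $\c(x) = \g(x)^{-1/2}\,x/\|x\|$ for $x \neq 0$. Boundedness of $\c$ follows from \eqref{eq: growth}, which gives $\|\c(x)\| \le \C^{-1/2}(1+\|x\|)^{-1}$. With $\a = \operatorname{Id}$ and $\b = 0$ the standing assumptions (S1)--(S4) are satisfied --- (S3) because $\a = \operatorname{Id}$ and $\c$ is bounded --- so $\mathcal{W}_{x_0}$ is the unique solution of the MP $(\operatorname{Id},0,x_0)$, and we let $\xq$ denote the unique solution of the MP $(\operatorname{Id},\c,x_0)$. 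Since $\mathcal{W}_{x_0}$ is non-explosive and $\a = \operatorname{Id}$, the functional of Proposition~\ref{prop: ac sin} satisfies $A_\theta = \int_0^\theta \langle \c(X_s),\a(X_s)\c(X_s)\rangle\,ds = \int_0^\infty \g(X_s)^{-1}\,ds$, $\mathcal{W}_{x_0}$-a.s. On the other hand, applying Theorem~\ref{theo: tc} with base solution $\mathcal{W}_{x_0}$ and $\f = \g^{-1}$ (which is locally bounded away from $0$ and $\infty$ since $\g$ is), the identity \eqref{eq: exp id} together with uniqueness of the MP $(\g\operatorname{Id},0,x_0)$ yields $\mathcal{W}_{x_0}\big(\int_0^\theta \g(X_s)^{-1}\,ds \in B\big) = \xp(\theta \in B)$ for all Borel $B \subseteq [0,\infty]$; since $\mathcal{W}_{x_0}(\theta = \infty) = 1$, taking $B = \{\infty\}$ gives
\[
\xp(\theta = \infty) \;=\; \mathcal{W}_{x_0}\Big(\int_0^\infty \g(X_s)^{-1}\,ds = \infty\Big) \;=\; \mathcal{W}_{x_0}(A_\theta = \infty).
\]
By Proposition~\ref{prop: ac sin}(ii), applied to $\mathcal{W}_{x_0}$ and $\xq$ (i.e.\ with diffusion coefficient $\operatorname{Id}$ and drifts $0$ and $\c$), $\mathcal{W}_{x_0}(A_\theta = \infty) = 1$ if and only if $\mathcal{W}_{x_0} \perp \xq$. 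Hence it remains to prove the singularity $\mathcal{W}_{x_0} \perp \xq$.

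For this step I would invoke \cite{BENARI2005179}. As $\a = \operatorname{Id}$ and $\|\c(x)\| = O(\|x\|^{-1})$ as $\|x\| \to \infty$, the pair $(\mathcal{W}_{x_0}, \xq)$ lies in the Fuchsian regime treated there, and the singularity part of the associated integral test gives $\mathcal{W}_{x_0} \perp \xq$ provided
\[
\int_{\mathbb{R}^d} \|\c(x)\|^2\,\|x - x_0\|^{2-d}\,dx \;=\; \int_{\mathbb{R}^d} \g(x)^{-1}\,\|x - x_0\|^{2-d}\,dx \;=\; \infty .
\]
A routine splitting argument then finishes the proof: for large $R$, on $\{\|x\| \le R\}$ both integrands are integrable (because $\g^{-1}$ is locally bounded and $y \mapsto \|y\|^{2-d}$, $y \mapsto \|y-x_0\|^{2-d}$ are locally integrable, $2-d > -d$), while on $\{\|x\| > R\}$ one has $\|x - x_0\| \asymp \|x\|$ and hence $\|x - x_0\|^{2-d} \asymp \|x\|^{2-d}$ with constants depending only on $d$; thus the above integral diverges if and only if $\int_{\mathbb{R}^d} \g^{-1}(x)\|x\|^{2-d}\,dx = \infty$, which is the hypothesis. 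Therefore $\mathcal{W}_{x_0} \perp \xq$, and the chain above gives $\xp(\theta = \infty) = 1$.

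I expect the main obstacle to be the singularity input from \cite{BENARI2005179}. The hypothesis only tells us, via the computation preceding Corollary~\ref{coro: sure expl}, that $E^{\mathcal{W}_{x_0}}[A_\theta] = \C_d \int_{\mathbb{R}^d} \g^{-1}(x)\|x-x_0\|^{2-d}\,dx = \infty$, and this by itself does \emph{not} force $A_\theta = \infty$ $\mathcal{W}_{x_0}$-a.s.; one genuinely needs the Fuchsian integral test, and what makes it applicable (and the criterion sharp) is precisely the quadratic lower bound \eqref{eq: growth}, i.e.\ $\|\c(x)\| = O(\|x\|^{-1})$. One must also take care that, since \cite{BENARI2005179} is stated for conservative diffusions, the equivalence between the singularity $\mathcal{W}_{x_0} \perp \xq$ and $\mathcal{W}_{x_0}$-a.s.\ divergence of $A_\theta$ is supplied by Proposition~\ref{prop: ac sin}(ii) (whose part~(ii) is, as remarked in the text, the non-conservative extension of the corresponding result of \cite{BENARI2005179}). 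The remaining ingredients --- the time-change identification via Theorem~\ref{theo: tc} and the comparison of the two Green kernels at infinity --- are routine.
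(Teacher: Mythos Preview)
Your argument is essentially the paper's own proof: choose $\c$ with $\|\c\|^2=\g^{-1}$, identify $\xp(\theta=\infty)$ with $\mathcal{W}_{x_0}(A_\theta=\infty)$ via the time-change (what you do by combining Theorem~\ref{theo: tc} and Proposition~\ref{prop: ac sin}(ii) is exactly Corollary~\ref{theo: abstract main}), and then invoke \cite[Corollary~4]{BENARI2005179}. The only cosmetic difference is that the paper takes $\c=\g^{-1/2}e_1$ rather than a radial choice, which keeps $\c$ locally H\"older continuous (no singularity at the origin) and shows that no radial structure is needed for the Ben-Ari--Pinsky test; with that choice your splitting argument at infinity is also unnecessary, since the criterion there is stated directly with $\|x\|^{2-d}$.
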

\begin{proof}
	We define \(\a \triangleq \textup{Id}, \b \triangleq 0\) and \(\c \triangleq \g^{- \frac{1}{2}} e_1\), where \(e_1\) is the first unit vector. Let \(\xq\) be the unique solution to the MP \((\a, \c, x_0)\), see Proposition \ref{prop: EU} in Appendix \ref{app}. Note that \(\langle \c,  \a \c\rangle = \g^{-1}\) is a strictly positive continuous function. Corollary \ref{theo: abstract main} yields that \(\xp (\theta = \infty) = 1\) if and only if \(\mathcal{W}_{x_0} \perp \xq\). It follows from \cite[Corollary 4]{BENARI2005179}\footnote{The statement of \cite[Corollary 4]{BENARI2005179} contains a small typo: \(|b(x)|\) has to be replaced by \(|b(x)|^2\), see \cite[Eq. 1.2]{BENARI2005179}.} that 
	\[
	\mathcal{W}_{x_0} \perp \xq \Leftrightarrow \int_{\mathbb{R}^d} \frac{\|x\|^{2 - d} dx}{\g(x)} = \infty.
	\]
	This completes the proof.
\end{proof}
\begin{remark}
	The growth condition \eqref{eq: growth} and \(\int_{\mathbb{R}^d} \g^{-1} (x) \|x\|^{2 - d} dx = \infty\) do not exclude themselves: In case \eqref{eq: growth} holds, we have
	\[
	\int_{\mathbb{R}^d} \g^{-1} (x) \|x\|^{2 - d} dx \leq \C \int_{\mathbb{R}^d} \frac{\|x\|^{2 - d} dx}{1 + \|x\|^2} = \C_d \int_0^\infty \frac{r dr}{1 + r^2} = \infty.
	\]
\end{remark}

The following proposition explains that in general the growth condition \eqref{eq: growth} is sharp. 
\begin{proposition}
	Let \(\rho\colon \mathbb{R}_+ \to [1, \infty)\) be an increasing function with \(\rho (0) = 1\) and \(\rho (x) \to \infty\) as \(x \to \infty\). There exists a function \(\g\) such that the following hold:
	\begin{enumerate}
		\item[\textup{(i)}] \(\g(x) \geq \frac{1 + \|x\|^2}{\rho (\|x\|)}\) for all \(x \in \mathbb{R}^d\).
				\item[\textup{(ii)}] \(P_0(\theta < \infty) = 1\).
						\item[\textup{(iii)}] \(\int_{\mathbb{R}^d} \g^{-1} (x) \|x\|^{2 - d} dx = \infty\).
	\end{enumerate}
\end{proposition}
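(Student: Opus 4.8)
The plan is to take \(\g\) equal to \((1+\|x\|)^{3}\) away from a sparse sequence of small balls escaping to infinity and, on each of these balls, to let \(\g\) drop to the smallest value \((1+\|x\|^{2})/\rho(\|x\|)\) permitted by (i). The reduction behind everything is the one used for Corollary~\ref{coro: sure expl}: applying Theorem~\ref{theo: tc} to the conservative problem MP \((\mathrm{Id},0,0)\) with time-change function \(\f=1/\g\), the solution of MP \((\g\,\mathrm{Id},0,0)\) is the \(\f\)-time change of the Wiener measure \(\mathcal W_{0}\); since Brownian motion never reaches \(\Delta\), Theorem~\ref{theo: tc}(iv) identifies the law of \(\theta\) under \(P_{0}\) with the law of \(\int_{0}^{\infty}\g^{-1}(X_{s})\,ds\) under \(\mathcal W_{0}\). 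Thus (ii) is equivalent to \(\mathcal W_{0}\big(\int_{0}^{\infty}\g^{-1}(X_{s})\,ds<\infty\big)=1\), which is what I would prove; (iii) is then a plain integral estimate.

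For the construction I would set \(t_{k}\triangleq(k+1)^{-2/(d-2)}\), shifting the index so that \(t_{k}\le\tfrac12\); then \(\sum_{k}t_{k}^{\,d-2}=\sum_{k}(k+1)^{-2}<\infty\) while \(t_{k}^{\,d}=t_{k}^{\,d-2}t_{k}^{\,2}=(k+1)^{-2-4/(d-2)}\). Using that \(\rho\) is increasing with \(\rho(\infty)=\infty\), I would choose radii \(1<R_{1}<R_{2}<\dots\uparrow\infty\) increasing so fast that the closed balls \(\bar B_{k}\triangleq\bar B(R_{k}e_{1},t_{k}R_{k})\) are pairwise disjoint and that \(\rho(R_{k}/2)\ge(k+1)^{2+4/(d-2)}\), and set
\[
\g(x)\triangleq\frac{1+\|x\|^{2}}{\rho(\|x\|)}\ \ \text{on}\ \ \bigcup_{k}B_{k},\qquad \g(x)\triangleq(1+\|x\|)^{3}\ \ \text{elsewhere}.
\]
Then \(\g\) is Borel, \((0,\infty)\)-valued and, since \(\rho\) is finite and increasing, locally bounded away from \(0\) and \(\infty\) (on \(\{\|x\|\le M\}\) one has \(\g\ge\min(1/\rho(M),1)>0\) and \(\g\le(1+M)^{3}\)); by Theorem~\ref{theo: tc} the solution \(P_{0}\) exists and is unique. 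Part (i) is immediate: off the balls \((1+\|x\|)^{3}\ge1+\|x\|^{2}\ge(1+\|x\|^{2})/\rho(\|x\|)\) because \(\rho\ge1\), and on the balls there is equality.

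For (iii) I only need a lower bound from \(\bigcup_{k}B_{k}\): on \(B_{k}\) one has \(\|x\|\in[R_{k}/2,2R_{k}]\), so \(\g^{-1}(x)\|x\|^{2-d}\ge c_{d}\,\rho(R_{k}/2)R_{k}^{-d}\) for a dimensional constant \(c_{d}>0\); multiplying by \(\mathrm{Leb}(B_{k})\asymp(t_{k}R_{k})^{d}\) and summing gives \(\int_{\mathbb R^{d}}\g^{-1}(x)\|x\|^{2-d}\,dx\ge c_{d}'\sum_{k}\rho(R_{k}/2)t_{k}^{\,d}\ge c_{d}'\sum_{k}1=\infty\). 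For (ii) I would split, along the partition \(\mathbb R^{d}=(\bigcup_{k}B_{k})^{c}\sqcup\bigsqcup_{k}B_{k}\),
\[
\int_{0}^{\infty}\g^{-1}(X_{s})\,ds=J+\sum_{k}I_{k},\qquad J\triangleq\int_{0}^{\infty}\frac{\mathbf{1}_{(\cup B_{k})^{c}}(X_{s})}{(1+\|X_{s}\|)^{3}}\,ds,\qquad I_{k}\triangleq\int_{0}^{\infty}\frac{\mathbf{1}_{B_{k}}(X_{s})}{\g(X_{s})}\,ds.
\]
The Green-function formula quoted above gives \(E^{\mathcal W_{0}}[J]\le\C_{d}\int_{\mathbb R^{d}}(1+\|x\|)^{-3}\|x\|^{2-d}\,dx<\infty\), so \(J<\infty\) \(\mathcal W_{0}\)-a.s. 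Each \(I_{k}\) vanishes unless the path meets \(\bar B_{k}\), an event of \(\mathcal W_{0}\)-probability \((t_{k}R_{k}/R_{k})^{d-2}=t_{k}^{\,d-2}\) by the classical formula for Brownian hitting of a ball (harmonicity of \(y\mapsto\|y-R_{k}e_{1}\|^{2-d}\)); on \(B_{k}\) the integrand is bounded and, since \(d\ge3\) Brownian motion is transient, the Lebesgue time spent in the bounded set \(B_{k}\) is a.s. finite, so \(I_{k}<\infty\) a.s. Because \(\sum_{k}t_{k}^{\,d-2}<\infty\), the Borel--Cantelli lemma shows that the path meets only finitely many \(\bar B_{k}\), whence \(\mathcal W_{0}\)-a.s. \(\sum_{k}I_{k}<\infty\); this proves (ii).

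The delicate point — and the reason the perturbation must sit on small balls rather than on spherical shells — is that (iii) forces \(E^{\mathcal W_{0}}[\sum_{k}I_{k}]=\infty\), so a.s. finiteness of the perpetual integral cannot be read off a first moment and must instead come from transience making all but finitely many \(I_{k}\) vanish, which is exactly what Borel--Cantelli delivers once the hitting probabilities \(t_{k}^{\,d-2}\) are summable. A radial \(\g\) would fail, since then \(\int_{0}^{\infty}\g^{-1}(X_{s})\,ds\) is a functional of the Bessel process \(\|X\|\), for which a.s. finiteness is equivalent to integrability against the Green measure and so is incompatible with (iii). The two constraints \(\sum_{k}t_{k}^{\,d-2}<\infty\) and \(\sum_{k}\rho(R_{k}/2)t_{k}^{\,d}=\infty\) are compatible precisely because \(d\ge3\) produces the extra factor \(t_{k}^{\,2}=t_{k}^{\,d}/t_{k}^{\,d-2}\), which the divergence of \(\rho\) can absorb via a fast enough choice of the \(R_{k}\).
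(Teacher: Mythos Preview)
Your proposal is correct and follows essentially the same strategy as the paper: place a sparse sequence of balls along a ray with summable hitting probabilities, set \(\g\) to the minimal admissible value on the balls and to a rapidly growing function off them, then use Borel--Cantelli plus transience for (ii) and a direct lower bound over the balls for (iii). The only differences are cosmetic --- the paper takes the ball-to-center ratio to decay geometrically (\(3^{-n}\)) rather than polynomially, uses \(2+\|x\|^{4}\) instead of \((1+\|x\|)^{3}\) off the balls, and invokes the mean-value property of \(\|x\|^{2-d}\) for (iii) where you use a pointwise bound.
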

\begin{proof}
	We adapt the proof of \cite[Theorem 3]{BENARI2005179}. 
	Let \(e_1\) be the first unit vector, set \(x_1 \triangleq e_1\) and define inductively
	\begin{equation}  \label{eq: growth example}
	\begin{split}
	R_n &\triangleq 3^{-n} \|x_n\|,
	\\
	x_{n + 1} &\in \big\{t e_1 \colon t > 4 \|x_n\|, \rho (\tfrac{t}{2}) > 4^{d (n + 1)}\big\}.
	\end{split}\end{equation}
	Set \(B_{R} (x) \triangleq \{y  \in \mathbb{R}^d \colon \|x - y\| < R\}\) and note that the balls \((B_{R_n} (x_n))_{n \in \mathbb{N}}\) are disjoint, because
	\[
	\|x_{n + 1}\| - \|x_n\| > \frac{3 \|x_{n + 1}\|}{4} = \frac{3^{n + 2} R_{n + 1}}{4} > \frac{9}{8}\big(R_{n + 1} + R_{n}\big),
	\] 
	where we use \eqref{eq: growth example} and in particular that \(3R_{n + 1} > 4R_n\).
	Define 
	\[
	\g (x) \triangleq \begin{cases} 
	\frac{1 + \|x\|^2}{\rho (\|x_n\| - R_n)},& x \in B_{R_n} (x_n) \text{ for some }n \in \mathbb{N},\\
	2 + \|x\|^4,&x \not \in \bigcup_{n \in \mathbb{N}} B_{R_n} (x_n) \triangleq G.
	\end{cases}
	\]
It is clear that \(\g\) is Borel and locally bounded away from zero and infinity.
	If \(x \in B_{R_n} (x_n)\) we have \(\|x_n\| - R_n \leq \|x\|\) and 
	\[
	\frac{\g (x) \rho (\|x\|)}{1 + \|x\|^2} = \frac{\rho (\|x\|)}{\rho(\|x_n\| - R_n)} \geq  1,
	\]
	because \(\rho\) is increasing. 
	If \(x \not \in G = \bigcup_{n \in \mathbb{N}} B_{R_n} (x_n)\) we have
	\[
	\frac{\g (x) \rho (\|x\|)}{1 + \|x\|^2} \geq \rho (\|x\|) \geq 1.
	\]
	In other words, (i) holds.
	
	Next, we show (ii). Due to \cite[Corollary 3.19]{morters_peres_2010} we have 
	\[
	\sum_{n = 1}^\infty \mathcal{W}_0(X \textup{ hits } B_{R_n} (x_n)) = \sum_{n = 1}^\infty \Big(\frac{R_n}{\|x_n\|}\Big)^{d - 2} = \sum_{n = 1}^\infty 3^{- n (d + 2)} < \infty.
	\]
	Thus, the Borel--Cantelli lemma yields that \(\mathcal{W}_0\)-a.a. paths of \(X\) hit only finitely many elements of \((B_{R_n}(x_n))_{n \in \mathbb{N}}\).
	Recalling that Brownian motion is transient for \(d \geq 3\), i.e. that \(\mathcal{W}_0\)-a.a. paths of \(X\) leave bounded domains forever in finite time, we conclude that \(\mathcal{W}_0\)-a.s.
	\[
	\int_0^\infty \g^{-1} (X_s) \1_{G} (X_s) ds < \infty.
	\]
	Note that 
	\[
	E^{\mathcal{W}_0} \Big[ \int_0^\infty \frac{ds}{2 + \|X_s\|^4} \Big] = \int_{\mathbb{R}^d} \frac{\|x\|^{2 - d} dx}{2 + \|x\|^4}	= d \omega_d \int_0^\infty \frac{r dr}{2 + r^4} < \infty, 
	\]
	where \(\omega_d\) is the volume of the unit ball in \(\mathbb{R}^d\).
	We conclude that \(\mathcal{W}_0\)-a.s.
	\[
	\int_0^\infty \g^{-1} (X_s) ds < \infty.
	\] 
	Thus, Theorem \ref{theo: tc} yields that \(P_0 (\theta< \infty) = 1\), i.e. (ii) holds. 
	
	It is left to verify (iii).
	Using \eqref{eq: growth example}, the fact that \(f(x) = \|x\|^{2 - d}\) is harmonic on \(\mathbb{R}^d\) and the mean-value property of harmonic functions, we obtain
	\begin{align*}
	\int_{\mathbb{R}^d} \frac{\|x\|^{2 - d} dx}{\g(x)} &\geq \sum_{n = 1}^\infty \int_{B_{R_n} (x_n)} \frac{\|x\|^{2 - d} dx}{\g(x)}
	\\&= \sum_{n = 1}^\infty \rho (\|x_n\| - R_n) \int_{B_{R_n} (x_n)} \frac{\|x\|^{2 - d} dx}{1 + \|x\|^2}
		\\&\geq  \omega_d \sum_{n = 1}^\infty \rho (\|x_n\| - R_n) \frac{\|x_n\|^{2- d} R_n^{d}}{1 + (\|x_n\| + R_n)^2} 
				\\&\geq  \omega_d \sum_{n = 1}^\infty \frac{\rho ((1 - 3^{-n})\|x_n\|) }{1 + (1 + 3^{-n})^2}  \Big(\frac{R_n}{\|x_n\|}\Big)^d
				\\&\geq \frac{\omega_d}{5} \sum_{n = 1}^\infty \rho \Big(\frac{\|x_n\|}{2}\Big) 3^{- dn}
				\\&\geq \frac{\omega_d}{5} \sum_{n = 1}^\infty \Big(\frac{4}{3}\Big)^{- dn} = \infty.
	\end{align*}
	This implies (iii) and the proof is complete.
\end{proof}

In case \(\g\) is radially symmetric, the growth condition on \(\g\) is not needed:
\begin{corollary}\label{coro: radial}
	Suppose that \(\g (x) = \s (\|x\|)\) for a Borel function \(\s \colon \mathbb{R}_+ \to (0, \infty)\) which is locally bounded away from zero and infinity. The following hold:
	\begin{enumerate}
		\item[\textup{(i)}] If \(\int_{x_0}^\infty r \s^{-1} (r) dr < \infty\), then \(\xp (\theta < \infty) = 1\).
			\item[\textup{(ii)}] If \(\int_{x_0}^\infty r \s^{-1} (r) dr = \infty\), then \(\xp (\theta = \infty) = 1\).
	\end{enumerate}
\end{corollary}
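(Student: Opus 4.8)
The plan is to reduce the assertion to a one-dimensional explosion problem for the process $V\triangleq\|\X\|^2$ and then to invoke Feller's test.

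\emph{Step 1 (reduction to one dimension).} I would feed the test functions $f(x)=\phi(\|x\|^2)$, $\phi\in C^2(\er)$, into the martingale problem $\textup{MP}(\g\,\textup{Id},0,x_0)$ satisfied by $\xp$. These lie in $C^2(\erd)$, with $\nabla f(x)=2\phi'(\|x\|^2)x$ and $\tr\bigl(\nabla^2 f(x)\,\g(x)\,\textup{Id}\bigr)=\g(x)\bigl(4\|x\|^2\phi''(\|x\|^2)+2d\,\phi'(\|x\|^2)\bigr)$. Since $\b\equiv 0$ and $\g(\X_s)=\s(\sqrt{V_s})$, the defining $\xp$-martingales translate into the statement that, under $\xp$, $V$ solves the one-dimensional (possibly explosive) martingale problem with generator
\[
\mathcal{L}\phi(v)=2v\,\s(\sqrt v)\,\phi''(v)+d\,\s(\sqrt v)\,\phi'(v),\qquad v\ge 0,
\]
and initial value $\|x_0\|^2$. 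Working with $\|\cdot\|^2$ instead of $\|\cdot\|$ is the point: $\|\cdot\|^2\in C^2(\erd)$, so nothing is required at the origin, and the resulting one-dimensional coefficients $v\mapsto 4v\,\s(\sqrt v)$ and $v\mapsto d\,\s(\sqrt v)$ are locally bounded on $[0,\infty)$. Because $\theta_n=\inf(t\colon V_t\ge n^2)$, the explosion time of $V$ equals $\theta$, so $\xp(\theta<\infty)$ is exactly the probability that $V$ explodes to $\infty$.

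\emph{Step 2 (Feller's test).} Next I would apply the classical one-dimensional theory to $V$. On $(0,\infty)$ the generator $\mathcal{L}$ is non-degenerate, with scale density $p'(v)=v^{-d/2}$ --- the factor $\s(\sqrt v)$ cancels, reflecting that a time change does not change the scale --- hence $p(v)=-\tfrac{2}{d-2}v^{1-d/2}$ after affine normalization, and speed measure $m(dv)=\tfrac12\,\s(\sqrt v)^{-1}v^{d/2-1}\,dv$. Since $d\ge 3$, we have $p(0+)=-\infty$ and $p(\infty)=0$; thus the left endpoint $0$ is inaccessible (so $V$ never leaves $(0,\infty)$ and can explode only at $\infty$), and, $p'$ being non-integrable at $0$ while $m$ is finite near $0$, the Feller test integral at the left endpoint diverges. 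By Feller's test \cite[Theorem 5.5.29, Proposition 5.5.32]{KaraShre} (compare the one-dimensional remark in Section \ref{sec: MR}), $V$ is then almost surely non-explosive if and only if
\[
\int^\infty\bigl(p(\infty)-p(v)\bigr)\,m(dv)=\frac{1}{d-2}\int^\infty\frac{dv}{\s(\sqrt v)}=\frac{2}{d-2}\int^\infty r\,\s^{-1}(r)\,dr=\infty
\]
(substituting $v=r^2$), and almost surely explosive otherwise; as $\s^{-1}$ is locally bounded, the value of the lower limit is immaterial. Combined with Step 1 this yields both (i) and (ii), and $\{\theta<\infty\}$ automatically has probability $0$ or $1$ by Feller's dichotomy.

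\emph{On the obstacle.} Part (i) could instead be obtained from Corollary \ref{coro: sure expl}: for radial $\g$, Newton's theorem --- the spherical average of $y\mapsto\|y-x_0\|^{2-d}$ over $\{\|y\|=r\}$ equals $\max(r,\|x_0\|)^{2-d}$ --- gives
\[
\int_{\erd}\g^{-1}(x)\,\|x-x_0\|^{2-d}\,dx=d\,\omega_d\Bigl(\|x_0\|^{2-d}\int_0^{\|x_0\|} r^{d-1}\s^{-1}(r)\,dr+\int_{\|x_0\|}^\infty r\,\s^{-1}(r)\,dr\Bigr),
\]
which is finite exactly when $\int_{x_0}^\infty r\,\s^{-1}(r)\,dr<\infty$, the first summand being finite by local boundedness of $\s^{-1}$. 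The genuine difficulty is (ii): it cannot be routed through Corollary \ref{coro: no expl}, because the Fuchsian criterion \cite[Corollary 4]{BENARI2005179} underlying it presupposes a quadratic growth condition on $\g$ that is absent here, so non-explosion has to be established directly --- which is precisely what the reduction to $V$ together with Feller's test delivers. One mild technical point to keep in mind is that the Feller-test computation uses only the scale function and the speed measure and hence applies to the (a priori possibly non-unique) solution $V$ of the one-dimensional martingale problem, not merely to a well-posed stochastic differential equation.
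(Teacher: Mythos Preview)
Your proof is correct but follows a genuinely different route from the paper's. The paper's argument is a two-line application of its own main theorem: by Theorem \ref{theo: tc} applied with \(\f = \g^{-1}\), one has \(\xp(\theta \in A) = \mathcal{W}_{x_0}\bigl(\int_0^\infty \s^{-1}(\|X_s\|)\,ds \in A\bigr)\) for every Borel \(A\), and then the paper invokes a zero--one law for radial perpetual integrals of Brownian motion (Engelbert--Schmidt \cite{doi:10.1002/mana.19871310120} and Xue \cite{10.1007/BFb0083762}), which states that \(\mathcal{W}_{x_0}\bigl(\int_0^\infty \z(\|X_s\|)\,ds < \infty\bigr)\in\{0,1\}\) with value \(1\) precisely when \(\int^\infty r\,\z(r)\,dr < \infty\).

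You instead bypass Theorem \ref{theo: tc} altogether: working directly with the solution \(\xp\) to MP\((\g\,\textup{Id},0,x_0)\), you project onto \(V=\|X\|^2\) and settle explosion via Feller's test for the resulting one-dimensional diffusion. This is a legitimate, self-contained alternative --- in effect you are reproving the cited Bessel zero--one law through the scale/speed machinery rather than quoting it. The trade-off is that the paper's proof showcases its main theorem and is essentially a reference, whereas yours has to verify that \(V\) really is a one-dimensional diffusion on \((0,\infty)\) to which \cite[Theorem 5.5.29]{KaraShre} applies: the Engelbert--Schmidt conditions hold on \((0,\infty)\), so uniqueness in law forces \(V\) to coincide with the canonical weak solution, and \(p(0+)=-\infty\) makes the left endpoint inaccessible and forces \(v(0+)=\infty\). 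You address these points adequately; your observation that part (ii) cannot be routed through Corollary \ref{coro: no expl} because of the quadratic-growth hypothesis is exactly right, and your Newton's-theorem computation for part (i) is a pleasant bonus. One small caveat worth a sentence: if \(x_0=0\) then \(V_0=0\) lies on the boundary, so strictly speaking one should shift to a positive time before invoking the theory on \((0,\infty)\).
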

\begin{proof}
	Due to \cite[Theorem 2]{doi:10.1002/mana.19871310120} and \cite[Corollary 3]{10.1007/BFb0083762}, for every Borel function \(\z \colon \mathbb{R}_+ \to \mathbb{R}_+\)
		the following are equivalent:
		\begin{enumerate}
			\item[(a)] \(\mathcal{W}_{x_0}(\int_0^\infty \z(\|X_s\|) ds < \infty) > 0\).
					\item[(b)] \(\mathcal{W}_{x_0}(\int_0^\infty \z(\|X_s\|) ds < \infty) = 1\).
							\item[(c)] \(\int_{x_0}^\infty z \z(z) dz < \infty\).
	\end{enumerate}
The claims now follow directly from Theorem \ref{theo: tc}.
\end{proof}

\subsection{On the Absence of Arbitrage in Diffusion Markets}
Suppose that \(\a\) is continuous and strictly positive definite. Then, there exists a unique solution \(\xq\) to the MP \((\a, 0, x_0)\) due to Proposition \ref{prop: EU} in Appendix \ref{app}.
In addition, we assume that \(\xq\) is conservative and we define a process \(S = (S^1, \dots, S^d)\) by
\[
S^i \triangleq \exp \big( \X^i - x_0^i - \tfrac{1}{2} [\X^i, \X^i]\big), \quad i = 1, \dots, d.
\]
We think of \(S\) as discounted price process in a financial market with \(d\) risky assets. The assumption that \(\a\) is strictly positive definite corresponds to the assumption that \(S\) has non-vanishing volatility, which is a typical assumption in mathematical finance.
Because \(\xq\) solves a martingale problem with zero drift, the process \(S\) is a non-negative local \(\xq\)-martingale. Hence, we call \(\xq\) a \emph{local martingale measure}. We are interested whether \(S\) is a UI \(\xq\)-martingale, in which case we call \(\xq\) a \emph{martingale measure}. 
This question is of importance in mathematical finance as it determines the existence or absence of certain arbitrage opportunities, see \cite{DS,Shir} for more details.
We note that \(S^i\) equals \(Z\) as defined in \eqref{eq: Z} for \(- \c\) set to be the \(i\)-th unit vector \(e_i\). Thus, with \(\b \triangleq \a e_i\) we are in the setting of Section \ref{sec: MR}. In particular, (S1) -- (S5) hold by Proposition \ref{prop: EU} in Appendix \ref{app} and the assumptions on \(\a\).
Consequently, Corollary \ref{theo: abstract main} implies the following:
\begin{corollary}\label{coro: MM} \(S^i\) is a UI \(\xq\)-martingale if and only if \(Q^i_{x_0} (\theta < \infty) = 1\), where \(Q^i_{x_0}\) is the unique solution to the MP \((\a_{ii}^{-1} \a, \a^{-1}_{ii} \a e_i, x_0)\). 
\end{corollary}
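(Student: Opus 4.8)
The plan is to recognise $S^i$ as the stochastic exponential $Z$ of \eqref{eq: Z} for a suitable drift and then invoke Corollary \ref{theo: abstract main}(i). First I would put $\c \triangleq -e_i$ and $\b \triangleq \a e_i$, so that $\b + \a\c = 0$ and the (conservative) solution $\xq$ of the MP $(\a, 0, x_0)$ is exactly the measure that, in the notation of Section \ref{sec: MR}, solves the MP $(\a, \b + \a\c, x_0)$. Since $\xq$ is conservative we have $\theta = \infty$ $\xq$-a.s., hence $\overline{\X}_{\cdot} = \X_{\cdot}$ and $[\X^i, \X^i]_{\cdot} = \int_0^{\cdot} \a_{ii}(\X_s)\,ds$ under $\xq$; feeding $\langle \c, d\overline{\X}\rangle = -d\X^i$ and $\langle \c, \a\c\rangle = \a_{ii}$ into \eqref{eq: Z} gives $Z_t = \exp(\X^i_t - x_0^i - \tfrac{1}{2}[\X^i,\X^i]_t) = S^i_t$ for all $t \geq 0$, $\xq$-a.s. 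Thus the assertion that $S^i$ is a UI $\xq$-martingale and the assertion that $Z$ is a UI $\xq$-martingale coincide.

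Next I would check that (S1)--(S5) hold for this choice of $(\a, \b, \c)$, as was already recorded in the discussion preceding the statement: continuity of $\a$ makes $\a$, $\b = \a e_i$, $\a\c = -\a e_i$ and $\langle \c, \a\c\rangle = \a_{ii}$ locally bounded; strict positive-definiteness of $\a$ makes $\a_{ii}(x) = \langle e_i, \a(x)e_i\rangle$ strictly positive and hence, being continuous, locally bounded away from zero; and Proposition \ref{prop: EU} supplies existence and uniqueness for the MPs $(\a, 0, x_0)$ and $(\a, \a e_i, x_0)$. Consequently Corollary \ref{theo: abstract main}(i) applies and tells us that $Z$ is a UI $\xq$-martingale if and only if $\oQ(\theta < \infty) = 1$, where $\oQ$ is the unique solution (existing by Theorem \ref{theo: tc}) of the time-changed MP $(\langle \c, \a\c\rangle^{-1}\a, \langle \c, \a\c\rangle^{-1}\b, x_0)$.

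Finally I would identify $\oQ$ with $Q^i_{x_0}$: since $\c = -e_i$ we have $\langle \c, \a\c\rangle = \a_{ii}$, and since $\b = \a e_i$ the time-changed coefficients are $\a_{ii}^{-1}\a$ and $\a_{ii}^{-1}\a e_i$, so the defining martingale problem of $\oQ$ is precisely $(\a_{ii}^{-1}\a, \a_{ii}^{-1}\a e_i, x_0)$, which by uniqueness forces $\oQ = Q^i_{x_0}$; combining this with the identification $S^i = Z$ completes the argument. I do not expect a substantial obstacle here: the corollary is essentially a specialisation of Corollary \ref{theo: abstract main}(i). The only points that need care are fixing the sign of $\c$ so that $S^i$ itself (and not $1/S^i$) matches $Z$, and observing that $\langle \c, \a\c\rangle$ collapses to the diagonal entry $\a_{ii}$, which is exactly what turns the auxiliary diffusion coefficient into $\a_{ii}^{-1}\a$.
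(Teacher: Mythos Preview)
Your proposal is correct and follows essentially the same approach as the paper: identify $S^i$ with $Z$ via the choice $\c=-e_i$, $\b=\a e_i$ (so that $\b+\a\c=0$ and $\xq$ plays the role of the MP~$(\a,\b+\a\c,x_0)$), verify (S1)--(S5), and then apply Corollary~\ref{theo: abstract main}(i), noting that $\langle\c,\a\c\rangle=\a_{ii}$ turns the auxiliary MP into $(\a_{ii}^{-1}\a,\a_{ii}^{-1}\a e_i,x_0)$. The paper records exactly these identifications in the paragraph preceding the corollary and then states the result as an immediate consequence.
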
	Applying this corollary for all \(i = 1, \dots, d\), we obtained explosion criteria for \(\xq\) to be a martingale measure. 
Based on results from \cite{mckean1969stochastic,pinsky1995positive,SV} one can also formulate analytic conditions. We leave the formulation of such conditions to the reader.

Finally, let us stress that the results for finite and infinite time horizons are very different.
For example, in case \(d = 1\), the probability measure \(Q^1_{x_0}\) solves the MP \((1, 1, x_0)\), which is obviously conservative, and \(S = S^1\) is a no UI \(\xq\)-martingale,  while it is a \(\xq\)-martingale if and only if \(\int_0^\infty \frac{dx}{\a(x)} = \infty\), see \cite[Proposition 5.2]{doi:10.1142/S0219024918500024}.

\section{Proof of Theorem \ref{theo: tc}} \label{sec: pf main}
In this section we prove Theorem \ref{theo: tc}, i.e. we prove the following:
\begin{theoremo} 
	Let \(\f \colon \mathbb{R}^d \to (0, \infty)\) be Borel and locally bounded away from zero and infinity. There exist \(\mathbb{R}^d_\Delta\)-valued right-continuous measurable processes \(Y\) and \(U\) such that for every \(x_0 \in \mathbb{R}^d\) and every solution \(P_{x_0}\) to the MP \((\a, \b, x_0)\) the following hold:
	\begin{enumerate}
		\item[\textup{(i)}] \(Y\) has \(\xp\)-a.s. continuous paths.
		\item[\textup{(ii)}]  \(\xp\)-a.s. \(U \circ Y = X\).
		\item[\textup{(iii)}] \(\oP_{x_0} \triangleq \xp \circ Y^{-1}\) solves the MP \((\f^{-1} \a, \f^{-1} \b, x_0)\).
		\item[\textup{(iv)}] For all Borel sets \(A \subseteq [0, \infty]\)
		\begin{align*} 
		\xp \Big(\int_0^\theta \f(\X_s) ds  \in A\Big) = \oP_{x_0}(\theta  \in A).
		\end{align*}
	\end{enumerate}	
	Existence and uniqueness hold simultaneously for the MPs \((\a, \b, x_0)\) and \((\f^{-1}\a, \f^{-1} \b, x_0)\).
\end{theoremo}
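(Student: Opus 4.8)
The plan is to realise $Y$ and $U$ as a pair of mutually inverse pathwise time--change maps on $\Omega$ and to transport the martingale problem along $Y$.

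\emph{Construction of $Y$ and $U$.} For any Borel $g\colon\mathbb{R}^d\to(0,\infty)$ that is locally bounded away from $0$ and $\infty$ and any $\omega\in\Omega$, I would set $T^g_t(\omega)\triangleq\int_0^{t\wedge\theta(\omega)}g(\omega(s))\,ds$; this is continuous, nondecreasing and strictly increasing on $[0,\theta(\omega))$, because $g$ is bounded below by a positive constant on each ball and $\omega$ is continuous there. Let $\tau^g_t(\omega)\triangleq\inf\{s\ge0\colon T^g_s(\omega)>t\}$ (with $\inf\emptyset\triangleq\infty$) be its right-continuous inverse --- jointly measurable by standard facts --- and let $\Phi_g(\omega)$ be the path $t\mapsto\omega(\tau^g_t(\omega))$ for $t<T^g_\theta(\omega)$ and $t\mapsto\Delta$ for $t\ge T^g_\theta(\omega)$; then $\Phi_g$ is an $\mathbb{R}^d_\Delta$-valued right-continuous measurable process, and I would take $Y\triangleq\Phi_\f$ and $U\triangleq\Phi_{\f^{-1}}$ (note $\f^{-1}$ satisfies the same hypotheses). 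Two elementary pathwise facts, used throughout, are: the explosion time of $Y(\omega)$ equals $T^\f_\theta(\omega)=\int_0^\theta\f(\X_s(\omega))\,ds$, with $\inf\{t\colon\|Y_t(\omega)\|\ge n\}=T^\f_{\theta_n}(\omega)$; and, on the set $\{\omega\colon Y(\omega)\in\Omega\}$, the substitution $s=T^\f_u(\omega)$, $ds=\f(\omega(u))\,du$, gives $T^{\f^{-1}}_a(Y(\omega))=\tau^\f_a(\omega)$, hence $\tau^{\f^{-1}}_s(Y(\omega))=T^\f_s(\omega)$ and $U(Y(\omega))=\omega$.

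\emph{The paths are good --- the main step.} Fix $x_0$ and a solution $\xp$ of MP $(\a,\b,x_0)$. The crucial point is that $\xp(Y\in\Omega)=1$; otherwise $\oP_{x_0}\triangleq\xp\circ Y^{-1}$ is not even a measure on $\Omega$. By the first fact above, $Y(\omega)\in\Omega$ says that $t\mapsto Y_t(\omega)$ is left-continuous at $T^\f_\theta(\omega)$, and as $\X_{\tau^\f_t}\to\Delta$ when $t\uparrow T^\f_\theta$ automatically on $\{\theta<\infty\}$, it remains to show that, $\xp$-a.s.,
\[
\{\theta=\infty\}\cap\Big\{\textstyle\int_0^\infty\f(\X_s)\,ds<\infty\Big\}\subseteq\{\X_t\to\Delta\text{ as }t\to\infty\}.
\]
Since $\f\ge c_R>0$ on $\{\|x\|\le 2R\}$, the integral dominates $c_R\int_0^\infty\indik_{\{\|\X_s\|\le 2R\}}\,ds$, so it is enough to prove: for every $R$, $\xp$-a.s.\ on $\{\theta=\infty\}\cap\{\X_{t_k}\in\{\|x\|\le R\}\text{ for some }t_k\to\infty\}$ one has $\int_0^\infty\indik_{\{\|\X_s\|\le 2R\}}\,ds=\infty$. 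I would choose $\psi\in C^2(\mathbb{R}^d)$ with $\indik_{\{\|x\|\le R\}}\le\psi\le\indik_{\{\|x\|<2R\}}$; then $\mathcal{L}\psi\triangleq\langle\nabla\psi,\b\rangle+\tfrac{1}{2}\tr(\nabla^2\psi\,\a)$ is supported in $\{\|x\|\le 2R\}$ and bounded, say $|\mathcal{L}\psi|\le M\indik_{\{\|x\|\le 2R\}}$. By the martingale problem $N_t\triangleq\psi(\X_t)-\psi(x_0)-\int_0^{t\wedge\theta}\mathcal{L}\psi(\X_s)\,ds$ is a local $\xp$-martingale; stopping it additionally at $\inf\{t\colon\int_0^t\indik_{\{\|\X_s\|\le 2R\}}\,ds\ge j\}$ yields bounded martingales, so on $\{\theta=\infty\}\cap\{\int_0^\infty\indik_{\{\|\X_s\|\le 2R\}}\,ds<\infty\}$ the limit $\lim_{t\to\infty}N_t$ exists $\xp$-a.s. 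On that event $\int_0^t\mathcal{L}\psi(\X_s)\,ds$ converges too, hence so does $\psi(\X_t)$; if $\X$ returns to $\{\|x\|\le R\}$ at arbitrarily large times this limit equals $1$, so $\X_t\in\{\|x\|<2R\}$ eventually, contradicting the finiteness of $\int_0^\infty\indik_{\{\|\X_s\|\le 2R\}}\,ds$ on $\{\theta=\infty\}$. Hence $\xp(Y\in\Omega)=1$, which is part (i).

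\emph{Parts (ii)--(iv) and the simultaneity.} Part (ii) is the pathwise identity $U\circ Y=\X$ from the Construction, valid on the $\xp$-full set $\{Y\in\Omega\}$. For part (iv), the explosion time of $Y(\omega)$ is $\int_0^\theta\f(\X_s(\omega))\,ds$, so $\oP_{x_0}(\theta\in A)=\xp((\theta\circ Y)\in A)=\xp(\int_0^\theta\f(\X_s)\,ds\in A)$ for Borel $A\subseteq[0,\infty]$. For part (iii), fix $g\in C^2(\mathbb{R}^d)$, write $\mathcal{L}g$ for its $(\a,\b)$-generator and $\mathcal{L}^\circ g\triangleq\f^{-1}\mathcal{L}g$; then $M_{\cdot\wedge\theta_n}\triangleq g(\X_{\cdot\wedge\theta_n})-g(x_0)-\int_0^{\cdot\wedge\theta_n}\mathcal{L}g(\X_s)\,ds$ is a continuous $\xp$-martingale. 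As $(\tau^\f_t)$ is a right-continuous nondecreasing family of $\F$-stopping times and $\mathcal{G}_t\triangleq\mathcal{F}_{\tau^\f_t}$ is right-continuous, the time-change theorem for continuous local martingales makes $t\mapsto M_{\tau^\f_t\wedge\theta_n}$ a continuous local $(\mathcal{G}_t)$-martingale, and on $\{Y\in\Omega\}$, using $Y_t=\X_{\tau^\f_t}$, $\inf\{t\colon\|Y_t\|\ge n\}=T^\f_{\theta_n}$ and the substitution $s=T^\f_u$,
\[
M_{\tau^\f_t\wedge\theta_n}=g\big(Y_{t\wedge(\theta_n\circ Y)}\big)-g(x_0)-\int_0^{t\wedge(\theta_n\circ Y)}\mathcal{L}^\circ g(Y_s)\,ds .
\]
The right-hand side is bounded on each $[0,N]$ by $\sup_{\|x\|\le n}|g|+N\sup_{\|x\|\le n}|\f^{-1}\mathcal{L}g|$, so it is a genuine $(\mathcal{G}_t)$-martingale; pushing it forward by $Y$ shows that $g(\X_{\cdot\wedge\theta_n})-g(x_0)-\int_0^{\cdot\wedge\theta_n}\mathcal{L}^\circ g(\X_s)\,ds$ is an $\oP_{x_0}$-martingale, and since $g,n$ are arbitrary and $\oP_{x_0}(\X_0=x_0)=1$, $\oP_{x_0}$ solves MP $(\f^{-1}\a,\f^{-1}\b,x_0)$. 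Finally, all of the above applies verbatim to the data $(\f^{-1}\a,\f^{-1}\b)$ with $\f^{-1}$ in place of $\f$ (here $(\f^{-1})^{-1}=\f$), whose associated time-change map is $\Phi_{\f^{-1}}=U$: for each solution $Q$ of MP $(\f^{-1}\a,\f^{-1}\b,x_0)$, $Q\circ U^{-1}$ solves MP $(\a,\b,x_0)$ and $Y\circ U=\X$ holds $Q$-a.s. Together with (ii)--(iii), $P\mapsto P\circ Y^{-1}$ and $Q\mapsto Q\circ U^{-1}$ are mutually inverse bijections between the solution sets of the two martingale problems, so existence and uniqueness hold for one if and only if for the other.

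\emph{Where the difficulty lies.} Everything except the main step is bookkeeping around the pathwise time change together with a standard martingale time-change theorem. The real obstacle is the main step: since we assume neither non-explosion of $\xp$ nor ellipticity of $\a$, we must exclude $\xp$-a.s.\ the scenario in which $\X$ is conservative, spends only finite total time in each ball (so that $\int_0^\theta\f(\X_s)\,ds<\infty$), yet keeps returning to a bounded region rather than converging to $\Delta$; the Lyapunov/martingale argument above is exactly what rules this out.
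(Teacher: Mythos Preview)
Your proof is correct, and while parts (ii)--(iv) and the simultaneity argument essentially mirror the paper's (pathwise time change, optional stopping along $\tau^\f_t\wedge\theta_n$, and the mutual inverse pair $Y,U$), your treatment of the main step --- the $\xp$-a.s.\ continuity of $Y$ --- is genuinely different and considerably simpler than the paper's.

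The paper (its Lemma~4.1) attacks the event $\{\theta=\infty,\ T_\theta<\infty\}$ via an entrance/exit decomposition: it introduces the stopping times $\sigma^m_n,\tau^m_n$ for successive crossings of the spheres $\{\|x\|=m\}$ and $\{\|x\|=m+1\}$, and must show that on $\bigcap_n\{\sigma^m_n<\infty\}$ one has $\sum_k(\tau^m_k-\sigma^m_k)=\infty$ a.s. To bound each sojourn $\tau^m_k-\sigma^m_k$ from below it writes the local martingale part of $\|X\|^2$ as a time-changed Brownian motion via the Dambis--Dubins--Schwarz theorem (conditionally on $\mathcal{F}^o_{\sigma^m_n}$), extracts a uniform bound $E[e^{-(\tau^m_n-\sigma^m_n)}\mid\mathcal{F}^o_{\sigma^m_n}]\le\mathsf{C}<1$, and then runs an inductive Laplace-transform argument to conclude. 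This requires a fair amount of conditional-measure bookkeeping because no strong Markov property is assumed.

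Your route avoids all of this: with a single compactly supported $C^2$ bump $\psi$ between the $R$- and $2R$-balls, you observe that $\mathcal{L}\psi$ is bounded and supported in $\{\|x\|\le 2R\}$, so the local martingale $N_t=\psi(X_t)-\psi(x_0)-\int_0^t\mathcal{L}\psi(X_s)\,ds$ becomes a \emph{bounded} martingale once stopped at a level set of the occupation time of $\{\|x\|\le 2R\}$. Martingale convergence then forces $\psi(X_t)$ to have a limit on the event of finite occupation time, and recurrent visits to $\{\|x\|\le R\}$ pin that limit to $1$, which is self-contradictory. This argument needs only the $L^\infty$ martingale convergence theorem --- no DDS representation, no iterated conditioning, no Laplace transforms --- and it exploits directly the local boundedness of $\b,\a$ through the bound $|\mathcal{L}\psi|\le M\indik_{\{\|x\|\le 2R\}}$. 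The price is that one must check carefully (as you do) that the stopped process is a true martingale despite the possible explosion; the paper's approach pays a different price in the conditional DDS machinery. Both work under the same minimal hypotheses (S1), but yours is the more economical argument.
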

Let \(x_0 \in \mathbb{R}^d\) and let \(\xp\) be a solution to the MP \((\a, \b, x_0)\). To simplify the notation, we denote \(P \equiv \xp\).
We start the proof by defining \(Y\). For \(t \in \mathbb{R}_+\) we set
\begin{align*}
T_t \triangleq \int_0^{t \wedge \theta} \f(\X_s)  ds,\qquad 
L_t \triangleq \inf ( s \in \mathbb{R}_+ \colon T_{s } > t ).
\end{align*}
The functions \(T, L\colon \mathbb{R}_+ \to [0, \infty]\) are increasing. Because \(\f\) is locally  bounded, we have \(T_{\theta_n \wedge n} < \infty\) for all \(n \in \mathbb{N}\). Using this and the strict positivity of \(\f\), we see that \(T\) is finite, absolutely continuous and strictly increasing on \([0, \theta)\). Moreover, because \(\lim_{t \nearrow \theta} T_t = T_\theta\) by the monotone convergence theorem, \(T\) is everywhere continuous.
We also note that \(L\) is finite, continuous and strictly increasing on \([0, T_{\theta})\) and everywhere right-continuous, and
\(L_{T_s} = s\) for \(s < \theta\) and \(T_{L_t} = t\) for \(t < T_{\theta}\), see \cite[pp. 7 -- 9]{RY}. In particular, we have
\[
\lim_{t \nearrow T_\theta} L_t = \lim_{t \nearrow \theta} L_{T_t} = \theta,
\]
and \(L\) is continuous on \(\{\theta = \infty\}\). 
For \(t \in \mathbb{R}_+\) we define 
\[
\Y_t \triangleq \begin{cases}\X_{L_t},& t < T_\theta,\\
\Delta,& t \geq T_\theta.\end{cases}
\]
It is easy to see that \(Y\) is right-continuous and measurable. 
	Because \(\{t < T_\theta\} = \{L_t < \theta\}\), for every \(t < T_\theta\) we have \(Y_t \in \mathbb{R}^d\) and consequently, \(T_\theta \leq \theta(Y)\).
	Noticing that  \(\theta (Y) \leq T_\theta\) by definition, we obtain that 
	\begin{align}\label{coro: exp time}
	T_\theta = \theta(Y) = \inf(t \in \mathbb{R}_+ \colon Y_t = \Delta).
	\end{align}
The following lemma shows that \(Y\) has almost surely continuous paths, which is part (i) in Theorem \ref{theo: tc}.
\begin{lemma}\label{lem: cont in T}
\(P\)-a.s. \(Y_{T_\theta -} = \Delta\) on \(\{T_\theta < \infty\}\).
\end{lemma}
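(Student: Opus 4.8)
The plan is to reduce the statement to the claim that, $P$-a.s. on $\{T_\theta<\infty\}$, the coordinate process tends to $\Delta$ as time approaches the explosion time $\theta$, and then to prove this claim. Since $L$ is continuous and increasing on $[0,T_\theta)$ with $L_t\nearrow\theta$ as $t\nearrow T_\theta$ (established above), and $X$ has continuous paths in $\mathbb{R}^d_\Delta$, one has $Y_{T_\theta-}=\lim_{t\nearrow T_\theta}X_{L_t}=\lim_{s\nearrow\theta}X_s$ whenever the latter limit exists in $\mathbb{R}^d_\Delta$. On $\{\theta<\infty\}$ this limit equals $X_\theta$, and $X_\theta=\Delta$ because $\|X_{\theta_m}\|=m\to\infty$ while $\theta_m\nearrow\theta<\infty$, so $X_{\theta_m}\to\Delta$ and continuity forces $X_\theta=\Delta$. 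It therefore remains to show that $P$-a.s. on $E_0:=\{T_\theta<\infty\}\cap\{\theta=\infty\}$ one has $\|X_s\|\to\infty$ as $s\to\infty$.

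First I would exploit that $\f$ is locally bounded away from zero: for each $R\in\mathbb{N}$ there is $c_R>0$ with $\f\ge c_R$ on the closed ball $\bar B_R:=\{\|x\|\le R\}$, so that on $\{T_\theta<\infty\}$
\[
\mathrm{Leb}\{s\ge 0:\|X_s\|\le R\}\le c_R^{-1}\int_0^\theta\f(X_s)\,ds<\infty .
\]
In particular, on $E_0$ the process cannot eventually remain in any $\bar B_R$. Fixing $R$ and working on $E_0\cap\{\liminf_{s\to\infty}\|X_s\|<R\}$, the process then visits $\bar B_R$ and exits $B_{2R}:=\{\|x\|<2R\}$ at arbitrarily large times, so the stopping times $\sigma_1':=\inf\{t:\|X_t\|\le R\}$, $\sigma_k:=\inf\{t>\sigma_k':\|X_t\|\ge 2R\}$, $\sigma_{k+1}':=\inf\{t>\sigma_k:\|X_t\|\le R\}$ are all finite there; since $X$ stays in $\bar B_{2R}$ on each $[\sigma_k',\sigma_k]$, the occupation-time bound gives $\sum_k(\sigma_k-\sigma_k')<\infty$.

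The key step is a \emph{uniform} lower bound on the time needed to cross from $\bar B_R$ to $\partial B_{2R}$. Taking $f=\|\cdot\|^2$ in the definition of the martingale problem, the process $\|X_{t\wedge\tau}\|^2-\int_0^{t\wedge\tau}\big(\mathrm{tr}\,\a(X_s)+2\langle X_s,\b(X_s)\rangle\big)\,ds$ is a $P_x$-martingale, where $\tau$ denotes the exit time of $B_{2R}$; since $\a,\b$ are locally bounded, $\Lambda_R:=\sup_{\bar B_{2R}}(\mathrm{tr}\,\a+4R|\b|)<\infty$, and for $x\in\bar B_R$ optional stopping yields $4R^2\,P_x(\tau\le t)\le E_x[\|X_{t\wedge\tau}\|^2]\le R^2+\Lambda_R t$, hence $P_x(\tau>t_0)\ge\tfrac12$ with $t_0:=R^2/\Lambda_R>0$, uniformly in $x\in\bar B_R$. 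By the strong Markov property for martingale-problem solutions, $P(\sigma_k-\sigma_k'>t_0\mid\mathcal F_{\sigma_k'})\ge\tfrac12$ on $\{\sigma_k'<\infty\}$, and a conditional Borel--Cantelli lemma applied along the increasing filtration $(\mathcal F_{\sigma_k'})_k$ then forces $\sigma_k-\sigma_k'>t_0$ for infinitely many $k$ on $\{\sigma_k'<\infty\text{ for all }k\}$, so $\sum_k(\sigma_k-\sigma_k')=\infty$ there, contradicting the occupation-time bound. Thus $P\big(E_0\cap\{\liminf_{s}\|X_s\|<R\}\big)=0$ for every $R\in\mathbb{N}$, which is exactly $\|X_s\|\to\infty$ $P$-a.s. on $E_0$; combined with the reduction this gives $Y_{T_\theta-}=\Delta$ $P$-a.s. on $\{T_\theta<\infty\}$.

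The main obstacle is precisely the scenario that $T_\theta<\infty$ permits a priori: the process returning to a bounded region at arbitrarily large times while accumulating only finite total time there. The uniform positive lower bound $t_0$ on crossing times --- obtained from the $\|\cdot\|^2$ test function and nothing more than local boundedness of $\a$ and $\b$ --- is what excludes it; notably, no ellipticity or continuity of the coefficients enters.
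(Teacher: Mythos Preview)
Your overall strategy matches the paper's: reduce to showing that on $\{T_\theta<\infty,\theta=\infty\}$ the process leaves every ball forever, split into the ``eventually trapped'' case (handled by the occupation-time bound) and the ``infinitely many returns'' case, and in the latter use the test function $\|x\|^2$ together with local boundedness of $\a,\b$ to show that the successive crossing times cannot be summable. The paper gets a uniform bound $E^P[e^{-(\tau^m_n-\sigma^m_n)}\mid\mathcal{F}^o_{\sigma^m_n}]\le \mathsf{C}<1$ via the Dambis--Dubins--Schwarz theorem and then iterates; your Chebyshev-type bound $P(\sigma_k-\sigma_k'>t_0\mid\mathcal{F}_{\sigma_k'})\ge\tfrac12$ followed by the conditional Borel--Cantelli lemma is a nice alternative that avoids DDS.

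There is, however, a genuine gap. You obtain the uniform lower bound by writing ``$P_x(\tau>t_0)\ge\tfrac12$ uniformly in $x\in\bar B_R$'' and then invoke ``the strong Markov property for martingale-problem solutions'' to pass to the conditional statement. But in the setting of this lemma no uniqueness is assumed: $P$ is an arbitrary solution of the MP $(\a,\b,x_0)$, and such a $P$ need not be (strong) Markov, nor is there any reason why the regular conditional law at $\sigma_k'$ should coincide with some canonical $P_{X_{\sigma_k'}}$. The paper is explicit about this point: in the remark following the proof it notes that under well-posedness and continuity of $x\mapsto P_x$ the argument simplifies to exactly the kind of Markov computation you wrote, but that the proof given ``requires no uniqueness assumption on $P$''. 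What the paper does instead is establish \emph{directly} that the shifted increment process $M=K_{\cdot\wedge\gamma+\sigma^m_n}-K_{\sigma^m_n}$ is a martingale under the regular conditional probability $P(\cdot\mid\mathcal{F}^o_{\sigma^m_n})(\omega)$ for $\omega$ outside a null set, via optional stopping, dominated convergence and a countable-generator argument. Your Chebyshev bound can in fact be carried out in that conditional setting as well (optional stopping of the conditional martingale $M$ gives $E^P[\|X_{(\sigma_k'+t)\wedge\sigma_k}\|^2-\|X_{\sigma_k'}\|^2\mid\mathcal{F}^o_{\sigma_k'}]\le\Lambda_R t$, whence the same $t_0$), but you would have to replace the appeal to strong Markov by this direct conditional-martingale argument. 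As written, your final sentence --- that ``no ellipticity or continuity of the coefficients enters'' --- is true for the paper's proof but not yet for yours, since the strong Markov property you use is typically a consequence of well-posedness.
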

\begin{discussion}
	On \(\{T_\theta < \infty, \theta < \infty\}\) we simply have
	\(
	Y_{T_\theta - } = X_\theta = \Delta,
	\)
	but on \(\{T_\theta < \infty, \theta = \infty\}\) it is necessary to understand the behavior of \(X_t\) as \(t \to \infty\). 
	We stress that \(\theta = \infty\) does not exclude \(T_\theta < \infty\) in a pathwise sense. To see this, consider the following simple example:
\[
\f (x) = \1_{(- \infty, 0)} (x) + \sum_{k = 1}^\infty a_k \1_{[k - 1, k)} (x), \qquad  x \in \mathbb{R}, 0 < a_k \leq 1.
\] 
Clearly, \(\f\) is locally bounded away from zero and infinity and for \(\omega (t) = t\) the integral
\[
\int_0^\infty \f (X_s (\omega)) ds  = \sum_{k = 1}^\infty a_k 
\]
converges or diverges depending on whether \((a_k)_{k \in \mathbb{N}}\) is summable or not.
To understand why Lemma \ref{lem: cont in T} holds, note that problems with the limit of \(X_t\) as \(t \to \infty\) occur for paths which either stay in a bounded subset of \(\mathbb{R}^d\) or have a recurrent behavior, where we think for instance of a one-dimensional Brownian path. These cases are excluded by considering the set \(\{T_\theta < \infty\}\), because for some bounded set \(U \subset \mathbb{R}^d\) the positive value \(\inf_{x \in \overline{U}} \f(x)\) will contribute to \(T_\theta\) for an infinite time.
The proof below borrows ideas from \cite[Lemma IV.2.1]{IW89}.
\end{discussion}
\begin{proof}
	For simplicity assume that \(\|x_0\| \leq 1\).
	For every \(n, m \in \mathbb{N}\) we define
	\begin{alignat*}{3}
	\sigma^m_1 &\triangleq 0, \qquad &&\tau^m_1 &&\triangleq \inf(t \in \mathbb{R}_+ \colon \|X_t\| \geq m + 1),\\
	\sigma^m_n&\triangleq \inf(t > \tau^m_{n - 1} \colon \|X_t\| \leq m),\qquad &&\tau^m_n &&\triangleq \inf(t > \sigma^m_n \colon \|X_t \| \geq m + 1).
	\end{alignat*}
	Set 
	\[
	\S \triangleq  \bigcap_{m \in \mathbb{N}} \bigcup_{n \in \mathbb{N}} \{\tau^m_n < \infty, \sigma^m_{n + 1} = \infty\},
	\]
	and note that \(\S \subseteq \{Y_{T_\theta - } = \Delta\}\). We show that \(P\)-a.s. \(\{T_\theta < \infty\} \subseteq \S\). More precisely, we show the equivalent inclusion \(P\)-a.s. \(\S^c \subseteq \{T_\theta = \infty\}\).
	We obtain
	\begin{align*}
	\S^c &=  \bigcup_{m \in \mathbb{N}} \bigcap_{n \in \mathbb{N}} \big(\{ \tau^m_n = \infty \} \cup \{ \sigma^m_{n + 1} < \infty\} \big)
	\\&= \bigcup_{m \in \mathbb{N}} \bigcap_{n \in \mathbb{N}} \big( \{ \tau^m_n = \infty, \sigma^m_n < \infty \} \cup \{ \sigma^m_{n} = \infty\} \cup \{\sigma^m_{n + 1} < \infty\} \big)
	\\& \subseteq \bigcup_{m \in \mathbb{N}} \Big( \Big(\bigcup_{k \in \mathbb{N}} \{ \tau^m_k = \infty, \sigma^m_k < \infty \}\Big) \cup \Big(  \bigcap_{n \in \mathbb{N}} \big( \{ \sigma^m_{n + 1} < \infty\} \cup \{\sigma^m_n = \infty\} \big) \Big) \Big)
		\\& = \bigcup_{m \in \mathbb{N}} \Big( \Big(\bigcup_{k \in \mathbb{N}} \{ \tau^m_k = \infty, \sigma^m_k < \infty \}\Big) \cup \Big( \bigcap_{n \in \mathbb{N}} \{\sigma^m_n < \infty\} \Big)\Big)
	\\&\subseteq \Big(\bigcup_{m \in \mathbb{N}} \bigcup_{k \in \mathbb{N}} \{ \tau^m_k = \infty, \sigma^m_k < \infty \} \Big) \cup \Big( \bigcup_{i \in \mathbb{N}} \bigcap_{n \in \mathbb{N}} \{\sigma^i_n < \infty\}\Big)
	\\&\triangleq \S_1 \cup \S_2.
	\end{align*}
	Take \(\omega \in \S_1\). Then, there exist \(n = n(\omega), m = m(\omega) \in \mathbb{N}\) such that \(\sigma^m_n (\omega) < \infty\) and \(\|X_t (\omega)\| \leq m + 1\) for all \(t \geq \sigma^m_n (\omega)\). Consequently, \(\theta (\omega) = \infty\) and 
	\[
	T_{\theta(\omega)} (\omega) = \int_0^\infty \f (X_s (\omega)) ds \geq \int_{\sigma_n (\omega)}^\infty \f(X_s (\omega)) ds \geq \inf_{\|y\| \leq m + 1} \f (y) \int_{\sigma_{n} (\omega)}^\infty ds = \infty.
	\]
	This implies \(\S_1 \subseteq \{T_\theta = \infty\}\).
	
	Set 
	\[
	\Theta \triangleq \bigcup_{m \in \mathbb{N}} \Big\{ \sigma_n^m < \infty \text{ for all } n \in \mathbb{N} \text{ and } \sum_{k = 1}^\infty \big(\tau^m_k - \sigma^m_k\big) = \infty \Big\}.
	\]
	Take \(\omega \in \Theta\) and let \(m = m(\omega) \in \mathbb{N}\) be as in the definition of \(\Theta\). Then, 
	\[
	T_{\theta(\omega)} (\omega) \geq \sum_{k = 1}^\infty \int_{\sigma^m_k (\omega)}^{\tau^m_k (\omega)} \f (\X_s (\omega)) ds \geq \inf_{\|y\| \leq m + 1} \f(y) \sum_{k =1}^\infty (\tau^m_k (\omega) - \sigma^m_k (\omega)) = \infty.
	\]
	This implies that \(\Theta \subseteq \{T_\theta = \infty\}\).
	
	Next, we show that \(P\)-a.s. \(\S_2 = \Theta\), which then implies that \(P\)-a.s. \(\S^c \subseteq \{T_\theta = \infty\}\) and thereby completes the proof. 
	We fix \(m, n \in \mathbb{N}\).
Clearly, we have on \(\{\sigma^m_n < \infty\}\)
	\begin{align*}
	\tau^m_n - \sigma^m_n &= \inf (t \in \mathbb{R}_+ \colon \|X_{t + \sigma^m_n}\| \geq m + 1) \triangleq \gamma.
	\end{align*}
	We set 
	\begin{align*}
	K &\triangleq \|X\|^2 - \|X_0\|^2 - \int_0^\cdot \big(2 \langle X_s, \b(X_s) \rangle + \textup{tr} (\a (X_s)\big) ds,
	\end{align*}
	and on \(\{\sigma^m_n < \infty\}\) we further set
	\begin{align*}
		M &\triangleq K_{\cdot \wedge \gamma + \sigma^m_n} - K_{\sigma^m_n},\\
		 I &\triangleq \int_{\sigma^m_n}^{\cdot \wedge \gamma + \sigma^m_n} \big(2 \langle X_s, \b(X_s) \rangle + \textup{tr} (\a (X_s))\big) ds.
	\end{align*}
	Using that for every \(t \in \mathbb{R}_+\) on \(\{\sigma^m_n< \infty\}\)
	\[
	\{\|X_{t \wedge \gamma + \sigma^m_n} \| \geq m + 1\} \subseteq \{|\|X_{t \wedge \gamma + \sigma^m_n}\|^2 - \|X_{\sigma^m_n}\|^2| \geq 1\} \subseteq \{|M_t| \geq \tfrac{1}{2}\} \cup \{|I_t| \geq \tfrac{1}{2}\},
	\]
	we obtain that 
	\begin{align*}
	\gamma &\geq \inf (t \in \mathbb{R}_+ \colon |M_t| \geq \tfrac{1}{2}) \wedge \inf (t \in \mathbb{R}_+ \colon |I_t| \geq \tfrac{1}{2}) \text{ on}\ \{\sigma^m_n < \infty\}.
	\end{align*}
	Because 
	\[
	|I_t|  \leq \sup_{\|y\| \leq m + 1} \big| 2\langle y, \b(y) \rangle + \textup{tr} (\a(y))\big| \ t \triangleq \alpha t  \text{ on}\ \{\sigma^m_n < \infty\},
	\]
	we obtain that
	\begin{align}\label{eq: good ineq 1}
	\inf (t \in \mathbb{R}_+ \colon |I_t| \geq \tfrac{1}{2}) \geq \frac{1}{2 \alpha} \text{ on}\ \{\sigma^m_n < \infty\}. 
	\end{align}
	
For every \(t \in \mathbb{R}_+\) we have \(t \wedge \gamma + \sigma^m_n < \theta\) on \(\{\sigma^m_n < \infty\}\). Consequently,  \begin{align}\label{eq: conv ST}
(t \wedge \gamma + \sigma^m_n) \wedge \theta_k \wedge k \nearrow t \wedge \gamma + \sigma^m_n \text{ as }k \to \infty\ \textup{on}\ \{\sigma^m_n < \infty\}. \end{align} 
Applying the definition of the martingale problem with \(f(x) = \|x\|^2\) yields that for every \(k \in \mathbb{N}\) the process \(K_{\cdot \wedge \theta_k \wedge k}\) is a \(P\)-martingale.
Note that for every \(t \in \mathbb{R}_+\)
\begin{align}\label{eq: DOM}
\sup_{k \in \mathbb{N}} |K_{(t \wedge \gamma + \sigma^m_n) \wedge \theta_k \wedge k} - K_{\sigma^m_n \wedge \theta_k \wedge k}| \1_{\{\sigma^m_n < \infty\}} \leq 2 (n + 1)^2 + \alpha t.
\end{align}
It is well-known that \(\sigma^m_n\) and \(\tau^m_n\) are \((\mathcal{F}^o_t)_{t \geq 0}\)-stopping times, see \cite[Proposition 2.1.5]{EK}. We note that \(t \wedge \gamma + \sigma^m_n\), which is set to be \(\infty\) in case \(\sigma^m_n = \infty\),  is an \((\mathcal{F}^o_t)_{t \geq 0}\)-stopping time, too. To see this, note that for all \(s \in \mathbb{R}_+\) 
\begin{align*}
\{t \wedge \gamma + \sigma^m_n \leq s \} = \{t + \sigma^m_n \leq s&, \sigma^m_n < \infty, t + \sigma^m_n \leq \tau^m_n\}\\ &\cup \{\tau^m_n \leq s, \sigma^m_n < \infty, \tau^m_n \leq t + \sigma^m_n\} \in \mathcal{F}^o_s, 
\end{align*}
which follows because for any \((\mathcal{F}^o_t)_{t \geq 0}\)-stopping times \(\rho\) and \(\tau\) the following hold: \(\mathcal{F}^o_{\rho} \cap \{\rho \leq s\} \subseteq \mathcal{F}^o_s, \{\rho \leq \tau\} \in \mathcal{F}^o_\rho \cap \mathcal{F}^o_\tau,\) and \(\mathcal{F}^o_\rho \subseteq \mathcal{F}^o_\tau\) whenever \(\rho \leq \tau\).

Let \(s < t\) and take \(A \in \mathcal{F}^o_{s + \sigma^m_n}\) and \(G \in \mathcal{F}^o_{\sigma^m_n}\). 
Recalling \eqref{eq: conv ST} and \eqref{eq: DOM}, the dominated convergence and the optional stopping theorem yield that 
\begin{align*}
E^P \big[ M_t &\1_A \1_{G} \1_{\{\sigma^m_n < \infty\}} \big] 
\\&= \lim_{k \to \infty} E^P \big[ \big(K_{(t \wedge \gamma + \sigma^m_n) \wedge \theta_k \wedge k} - K_{\sigma^m_n \wedge \theta_k \wedge k}\big) \1_A \1_{G} \1_{\{\sigma^m_n < \infty\}} \big]
\\&=  \lim_{k \to \infty} E^P \big[ \big(E^P \big[K_{(t \wedge \gamma + \sigma^m_n) \wedge \theta_k \wedge k}| \mathcal{F}_{s + \sigma^m_n}\big] - K_{\sigma^m_n \wedge \theta_k \wedge k}\big)\1_A \1_{G} \1_{\{\sigma^m_n < \infty\}} \big]
\\&=  \lim_{k \to \infty} E^P \big[ \big(K_{(s \wedge \gamma + \sigma^m_n) \wedge \theta_k \wedge k} - K_{\sigma^m_n \wedge \theta_k \wedge k}\big)\1_A \1_{G} \1_{\{\sigma^m_n < \infty\}} \big]
\\&= E^P \big[ M_s \1_A \1_G \1_{\{\sigma^m_n < \infty\}}\big].
\end{align*}
We conclude that there exists a \(P\)-null set \(N (s, t, A)\) such that 
\[
E^P \big[ \big(M_t - M_s \big) \1_A \1_{\{\sigma^m_n < \infty\}} | \mathcal{F}^o_{\sigma^m_n}\big] (\omega) = 0
\]
for all \(\omega \not \in N (s, t, A)\). Recall that \(\mathcal{F}^o_{s + \sigma^m_n} = \sigma (X_{t \wedge (s + \sigma^m_n)}, t \in \mathbb{Q}_+)\) is countably generated, see \cite[Theorem I.6]{aries2007optimal}, and let \(\mathcal{C}_s\) be a countable system of generators of \(\mathcal{F}^o_{s + \sigma^m_n}\). Set
\[
N \triangleq \bigcup_{t \in \mathbb{Q}_+} \bigcup_{\mathbb{Q}_+ \ni s < t} \bigcup_{A\in \mathcal{C}_s} N(s, t, A), 
\]
which is a \(P\)-null set. Now, we conclude that for all \(\omega \not \in N \cup \{\sigma^m_n = \infty\}\) the process \(M\) is a continuous \(P(\cdot | \mathcal{F}^o_{\sigma^m_n})(\omega)\)-martingale for the shifted filtration \((\mathcal{F}^o_{t + \sigma^m_n})_{t \geq 0}\) and, by the backwards martingale convergence theorem, also for its right-continuous version \(\F_{\sigma^m_n} \triangleq (\mathcal{F}_{t + \sigma^m_n})_{t \geq 0}\), see also \cite[Lemma 6.2]{Kallenberg}. 

Fix \(\omega \not \in N \cup \{\sigma^m_n = \infty\}\). It follows similar to \cite[Proposition VIII.3.3]{RY} that \(P(\cdot | \mathcal{F}^o_{\sigma^m_n})(\omega)\)-a.s.
\[
[M, M] = 4 \int_{\sigma^m_n}^{\cdot \wedge \gamma + \sigma^m_n} \langle X_s, \a (X_s) X_s\rangle ds.
\]
	The Dambis, Dubins--Schwarz theorem (see e.g., \cite[Theorem 16.4]{Kallenberg}) yields that on a standard extension of the filtered probability space \((\Omega, \mathcal{F}, \F_{\sigma^m_n}, P(\cdot | \mathcal{F}^o_{\sigma^m_n})(\omega))\), which we ignore in our notation for simplicity, there exists a one-dimensional Brownian motion \(B\) such that \(P(\cdot | \mathcal{F}^o_{\sigma^m_n})(\omega)\)-a.s.
	\(
	M = B_{[M, M]}.
	\)
	Because \(P(\cdot | \mathcal{F}^o_{\sigma^m_n})(\omega)\)-a.s.
	\[
	4 \int_{\sigma^m_n}^{t \wedge \gamma + \sigma^m_n} \langle X_s, \a (X_s) X_s\rangle ds \leq 4\Big( \sup_{\|y\| \leq m + 1} \langle y, \a (y) y\rangle \vee 1 \Big) t \triangleq \beta t,	\quad t \in \mathbb{R}_+,
	\]
	we have \(P(\cdot | \mathcal{F}^o_{\sigma^m_n})(\omega)\)-a.s.
	\begin{align}\label{eq: good ineq 2}
	\inf (t \in \mathbb{R}_+ \colon |B_{[M, M]_t}| \geq \tfrac{1}{2}) \geq \frac{\inf(t \in \mathbb{R}_+ \colon |B_t| \geq \tfrac{1}{2})}{\beta} \triangleq \frac{\tau}{\beta}. 
	\end{align}
	In summary, \eqref{eq: good ineq 1} and \eqref{eq: good ineq 2} imply that 
	\[
	E^P \big[ e^{- (\tau^m_n - \sigma^m_n)} | \mathcal{F}^o_{\sigma^m_n} \big] (\omega) \leq E \big[ e^{-  \frac{\tau}{\beta} \wedge \frac{1}{2 \alpha}}\big] \triangleq \C.
	\]
	We note that the law of \(\tau\) under \(P(\cdot | \mathcal{F}^o_{\sigma^m_n})(\omega)\) only depends on the Wiener measure, which means that \(\C\) is a constant independent of \(n, m\) and \(\omega\).
	Note also that \(\C < 1\). 

	Now, we obtain for all \(n \in \mathbb{Z}_+\)
	\begin{align*}
	E^P \Big[ \prod_{k = 1}^{n + 1}\hspace{0.05cm} &\1_{\{\sigma^m_{k} < \infty\}} e^{- (\tau^m_k - \sigma^m_k)}\Big] 
	\\&= E^P \Big[ \prod_{k = 1}^{n} \1_{\{\sigma^m_k < \infty\}}  e^{- (\tau^m_k - \sigma^m_k)} \1_{\{\sigma^m_{n + 1} < \infty\}} E^P \big[ e^{- (\tau^m_{n + 1} - \sigma^m_{n + 1})} \big| \mathcal{F}^o_{\sigma^m_{n + 1}}\big]\Big] 
	\\&\leq \C E^P \Big[ \prod_{k = 1}^{n} \1_{\{\sigma^m_k < \infty\}}  e^{- (\tau^m_k - \sigma^m_k)} \Big].
	\end{align*}
	By induction, we conclude
	\[
	E^P \Big[ \prod_{k = 1}^{n} \1_{\{\sigma^m_{k} < \infty\}} e^{- (\tau^m_k - \sigma^m_k)}\Big]  \leq \C^{n}, \quad n \in \mathbb{N}.
	\]
	Letting \(n \to \infty\) and using the dominated convergence theorem yields that
	\[
	E^P \Big[ \prod_{k = 1}^{\infty} \1_{\{\sigma^m_{k} < \infty\}} e^{- (\tau^m_k - \sigma^m_k)}\Big] = 0.
	\]
	This implies that \(P\)-a.s. for all \(m \in \mathbb{N}\)
	\[
	\prod_{k = 1}^{\infty} \1_{\{\sigma^m_{k} < \infty\}} e^{- (\tau^m_k - \sigma^m_k)} = \prod_{k = 1}^{\infty} \1_{\{\sigma^m_{k} < \infty\}} e^{- \sum_{i = 1}^\infty (\tau^m_i - \sigma^m_i)} = 0.
	\]
We conclude that \(P\)-a.s. \(\Sigma^c_2 \subseteq \Theta\). The proof is complete.
\end{proof}
\begin{remark}
	In case the MP \((\a, \b, x)\) has a unique solution \(P_x\) for all \(x \in \mathbb{R}^d\) and \(x \mapsto P_x\) is continuous, the proof of \(P\)-a.s. \(\mathcal{O}_2 = \Theta\) in Lemma \ref{lem: cont in T} simplifies substantially:
	It follows as in \cite[Lemma 11.1.2]{SV} that the map \(\omega \mapsto e^{- \tau_1^m (\omega)}\) is \(P_x\)-a.s. continuous for every \(x \in \mathbb{R}^d\). Thus, by the continuous mapping theorem, 
	\(x \mapsto E_x [e^{- \tau^m_1}]\) is continuous. Consequently, \(\C \triangleq \sup_{\|x\| \leq m} E_x [e^{- \tau^m_1}] < 1\). Now, using the strong Markov property, we obtain 
\begin{align*}
	E_{x_0} \Big[ \prod_{k = 1}^{n + 1}\hspace{0.05cm} &\1_{\{\sigma^m_{k} < \infty\}} e^{- (\tau^m_k - \sigma^m_k)}\Big] 
\\&= E_{x_0} \Big[ \prod_{k = 1}^{n} \1_{\{\sigma^m_k < \infty\}}  e^{- (\tau^m_k - \sigma^m_k)} \1_{\{\sigma^m_{n + 1} < \infty\}} E_{x_0} \big[ e^{- (\tau^m_{n + 1} - \sigma^m_{n + 1})} \big| \mathcal{F}^o_{\sigma^m_{n + 1}}\big]\Big] 
\\&= E_{x_0} \Big[ \prod_{k = 1}^{n} \1_{\{\sigma^m_k < \infty\}}  e^{- (\tau^m_k - \sigma^m_k)} \1_{\{\sigma^m_{n + 1} < \infty\}} E_{X_{\sigma^m_{n + 1}}} \big[ e^{- \tau^m_1}\big]\Big] 
\\&\leq \C E_{x_0} \Big[ \prod_{k = 1}^{n} \1_{\{\sigma^m_k < \infty\}}  e^{- (\tau^m_k - \sigma^m_k)} \Big]
\leq \C^{n + 1} \to 0 \text{ as } n \to \infty.
\end{align*}
The other proof of Lemma \ref{lem: cont in T} requires no uniqueness assumption on \(P\) and no continuity assumptions on \(\b\) and/or \(\a\), which are often required for \(x \mapsto P_x\) to be continuous, see \cite{pinsky1995positive,SV}.
\end{remark}

For \(n \in \mathbb{N}\) set 
\(
\gamma_n \triangleq T_{\theta_n \wedge n}
\)
and note that 
\(
L_{\gamma_n} = \theta_n \wedge n.
\)
It follows from \cite[Proposition V.1.4]{RY} that for all \(t \in \mathbb{R}_+\) and \(n \in \mathbb{N}\)
\[
L_{t \wedge \gamma_n} = \int_0^{L_{t \wedge \gamma_n}} \f^{-1} (\X_s) d T_s = \int_0^{t \wedge \gamma_n} \f^{-1}(X_{L_s}) d T_{L_s} = \int_0^{t \wedge \gamma_n} \f^{-1}(X_{L_s}) d s.
		\]
In other words, we have for all \(n \in \mathbb{N}\)
\begin{align}\label{eq: meas}
\1_{\{t \leq \gamma_n\}} d L_t = \1_{\{t \leq \gamma_n\}} \f^{-1}(\X_{L_t}) dt.
\end{align}
Using  \eqref{eq: meas} and again \cite[Proposition V.1.4]{RY}, we obtain for every locally bounded Borel function \(\g \colon \mathbb{R}^d \to \mathbb{R}\) that for all \(t \in \mathbb{R}_+\) and \(n \in \mathbb{N}\)
\begin{equation}\label{eq: tc}\begin{split}
\int_0^{t \wedge \gamma_n} \frac{\g(Y_s) ds}{\f (Y_s)} &= \int_0^{t \wedge \gamma_n} \frac{\g(\X_{L_s})ds}{\f(\X_{L_s})}
\\&= \int_0^{t \wedge \gamma_n} \g (\X_{L_s}) dL_s
\\&= \int_0^{L_{t \wedge \gamma_n}} \g (\X_s) ds.
\end{split}
\end{equation}

Note that \(L_t\) is an \(\F\)-stopping time.
Define the time-changed filtration \(\G = (\mathcal{G}_t)_{t \geq 0}  \triangleq (\mathcal{F}_{L_t})_{t \geq 0}\). Because \((L_t)_{t \geq 0}\) is right-continuous, also \(\G\) is right-continuous, and because \(\theta_n \wedge n\) is an \(\F\)-stopping time, \cite[Lemma 10.5]{J79} implies that \(\gamma_n = T_{\theta_n \wedge n}\) is a \(\G\)-stopping time and that \(t \mapsto L_{t \wedge \gamma_n}\) is an increasing sequence of \(\F\)-stopping times.

We set 
\[
\mathfrak{K} f \triangleq \langle \nabla f, \b \rangle + \tfrac{1}{2} \textup{tr} (\nabla^2 f \a), \quad f \in C^2(\mathbb{R}^d).
\]
Recall that, by the definition of the MP \((\a, \b, x_0)\), the process 
\[
f(X_{\cdot \wedge \theta_n}) - f(x_0) - \int_0^{\cdot \wedge \theta_n} \mathfrak{K} f (X_s) ds
\]
is a \(P\)-martingale. 

Recall further that \(L_{t \wedge \gamma_n} \leq \theta_n \wedge n\).
Using \eqref{eq: tc} and the optional stopping theorem, for \(s < t, n \in \mathbb{N}\) and \(f \in C^2 (\mathbb{R}^d)\) we obtain that \(P\)-a.s.
\begin{align*}
E^P \Big[ f(Y_{t \wedge \gamma_n}) &- f(x_0) - \int_0^{t \wedge \gamma_n} \frac{\mathfrak{K} f (Y_r) dr}{\f (Y_s)} \big| \mathcal{G}_s\Big]
\\&= 
E^P \Big[ f(\X_{L_{t \wedge \gamma_n} \wedge \theta_n \wedge n}) - f(x_0) - \int_0^{L_{t \wedge \gamma_n} \wedge \theta_n \wedge n} \mathfrak{K} f (\X_r) dr \big| \mathcal{F}_{L_s} \Big] 
\\&= 
f(\X_{L_{t \wedge \gamma_n} \wedge \theta_n \wedge n \wedge L_s}) - f(x_0) - \int_0^{L_{t \wedge \gamma_n} \wedge \theta_n \wedge n \wedge L_s} \mathfrak{K} f (\X_r) dr
\\&= f(\X_{L_{s \wedge \gamma_n}}) - f(x_0) - \int_0^{L_{s \wedge \gamma_n}} \mathfrak{K} f (\X_r) dr 
\\&= f(Y_{s \wedge \gamma_n}) - f(x_0) - \int_0^{s \wedge \gamma_n} \frac{\mathfrak{K} f(Y_r) dr}{\f (Y_r)}.
\end{align*}
This yields that 
\[
f(Y_{\cdot \wedge \gamma_n}) - f(x_0) - \int_0^{\cdot \wedge \gamma_n} \frac{\mathfrak{K} f (Y_r) dr}{\f (Y_s)}
\]
is a \(P\)-martingale for the filtration \(\G\). In particular, by  \cite[Lemma II.67.10]{RW1}, it is a \(P\)-martingale for the \(P\)-augmentation of \(\G\), which we denote by \(\G^P\).
We redefine the process \(Y\) on a \(P\)-null set such that it gets everywhere continuous paths. With abuse of notation we denote the redefined process still by \(Y\). Note that \(\theta_n (Y)\) is a \(\G^P\)-stopping time.
Recalling that \(P\)-a.s. \(\gamma_n \nearrow T_\theta = \theta (Y)\) and \(t \wedge \theta_n (Y) < \theta (Y)\), the dominated convergence theorem yields for all \(s < t, n \in \mathbb{N}\) and \(f \in C^2 (\mathbb{R}^d)\) that \(P\)-a.s.
\begin{align*}
E^P \Big[ f(Y_{t \wedge \theta_n (Y)}) &- f(x_0) - \int_0^{t \wedge \theta_n (Y)} \frac{\mathfrak{K} f (Y_r) dr}{\f (Y_s)} \big| \mathcal{G}^P_s\Big]
\\&= \lim_{m \to \infty} E^P \Big[ f(Y_{t \wedge \theta_n (Y) \wedge \gamma_m}) - f(x_0) - \int_0^{t \wedge \theta_n (Y) \wedge \gamma_m} \frac{\mathfrak{K} f (Y_r) dr}{\f (Y_s)} \big| \mathcal{G}^P_s\Big]
\\&= \lim_{m \to \infty} \Big( f(Y_{s \wedge \theta_n (Y) \wedge \gamma_m}) - f(x_0) - \int_0^{s \wedge \theta_n(Y) \wedge \gamma_m} \frac{\mathfrak{K} f(Y_r) dr}{\f (Y^n_r)} \Big)
\\&= f(Y_{s \wedge \theta_n (Y)}) - f(x_0) - \int_0^{s \wedge \theta_n(Y)} \frac{\mathfrak{K} f(Y_r) dr}{\f (Y^n_r)}.
\end{align*}
Using the tower rule, we conclude that 
\[
f(Y_{\cdot \wedge \theta_n (Y)}) - f(x_0) - \int_0^{\cdot \wedge \theta_n (Y)} \frac{\mathfrak{K} f (Y_r) dr}{\f (Y_s)}
\]
is a \(P\)-martingale for the filtration generated by \(Y\). Consequently, the push-forward \(P \circ \Y^{-1}\) solves the MP \((\f^{-1}\a, \f^{-1}\b, x_0)\), which is part (iii) of Theorem \ref{theo: tc}. Recalling \eqref{coro: exp time} shows the formula \eqref{eq: exp id}, i.e. part (iv) of Theorem \ref{theo: tc}.

We now introduce the process \(U\) and verify (ii) in Theorem \ref{theo: tc}.
For \(t \in \mathbb{R}_+\) we define
\begin{align*}
S_t \triangleq \int_0^{t \wedge \theta} \f^{-1}(\X_s)  ds,\qquad 
A_t \triangleq \inf ( s \in \mathbb{R}_+ \colon S_{s} > t ),
\end{align*}
and 
\[
U_t \triangleq \begin{cases} X_{A_t}, & t < S_\theta,\\
\Delta,& t \geq S_\theta.
\end{cases}
\]
Again, it is easy to see that \(U\) is right-continuous and measurable. Using \eqref{coro: exp time} and \eqref{eq: meas}, we obtain \(P\)-a.s. for all \(t \in \mathbb{R}_+\)
\[
S_{t} \circ Y = \int_0^{t \wedge T_\theta} \f^{-1} (Y_s) ds = \lim_{n \to \infty} \int_0^{t \wedge \gamma_n \wedge n} \f^{-1} (X_{L_s}) ds = \lim_{n \to \infty} L_{t \wedge \gamma_n \wedge n} = L_t.
\]
In particular, \(P\)-a.s. \(S_\theta \circ Y = \theta\). Now, we obtain \(P\)-a.s. for \(t \in \mathbb{R}_+\)
\[
A_t \circ Y = \inf (s \in \mathbb{R}_+ \colon L_s > t) = T_t,
\]
which implies \(X_{A_t} \circ Y = X_{L_{T_t}} = X_t\) for all \(t < S_\theta \circ Y=  \theta\).
In summary, we conclude that \(P\)-a.s. \(U \circ Y = X\), i.e. that (ii) in Theorem \ref{theo: tc} holds.

Finally, let us explain that if the MP \((\f^{-1}\a, \f^{-1} \b, x_0)\) has at most one solution, then \(P\) is the unique solution to the MP \((\a, \b, x_0)\).
For \(n \in \mathbb{N}\) let \(0 \leq t_1 < t_2 < \dots < t_n < \infty\) and \(G_1, \dots, G_n \in \mathcal{B}(\mathbb{R}^d_\Delta)\). 
Suppose that \(Q\) is a second solution to the MP \((\a, \b, x_0)\). Then, the push-forwards \(P \circ Y^{-1}\) and \(Q\circ Y^{-1}\) both solve the MP \((\f^{-1}\a, \f^{-1}\b, x_0)\) and we deduce from the uniqueness assumption and (ii) in Theorem \ref{theo: tc} that
\begin{align*}
P(X_{t_1} \in G_1, \dots, X_{t_n} \in G_n) 
&= P \circ Y^{-1} (U_{t_1} \in G_1, \dots, U_{t_n} \in G_n) 
\\&= Q \circ Y^{-1} (U_{t_1} \in G_1, \dots, U_{t_n} \in G_n)
\\&= Q(X_{t_1} \in G_1, \dots, X_{t_n} \in G_n).
\end{align*}
By a monotone class argument, \(P = Q\). The proof is complete.
\qed

\appendix
\section{A Few Existence and Uniqueness Results}\label{app}
In this appendix we collect some existence and uniqueness results for martingale problems. 
We assume that \(\b \colon \mathbb{R}^d \to \mathbb{R}^d\) and \(\a \colon \mathbb{R}^d \to \mathbb{S}^d\) are locally bounded and we
formulate the following conditions:
\begin{enumerate}
	\item[\textup{(A1)}] \(\b\) and \(\a\) are continuous. 
	\item[\textup{(A2)}] \(\textup{det} (\a)\) is locally bounded away from zero.
	\item[\textup{(A3)}] \(\a\) is continuous and \(\langle \xi, \a (x) \xi \rangle > 0\) for all \(x \in \mathbb{R}^d\) and \(\xi \in \mathbb{R}^d \backslash \{0\}\). 
	\item[\textup{(A4)}] \(\b\) is locally Lipschitz continuous and \(\a\) has a locally Lipschitz continuous root.
\end{enumerate}
We use this opportunity and illustrate that Theorem \ref{theo: tc} can also be used to obtain existence and uniqueness criteria for martingale problems.
\begin{proposition}\label{prop: EU}
	Let \(x_0 \in \mathbb{R}^d\).
	If \textup{(A1)} or \textup{(A2)} holds, then there exists a solution \(\xp\) to the MP \((\a, \b, x_0)\).
	If \textup{(A3)} or \textup{(A4)} holds, then there exists a unique solution \(\xp\) to the MP \((\a, \b, x_0)\).
\end{proposition}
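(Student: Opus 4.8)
The plan is to reduce each of the four hypotheses, via the time-change correspondence of Theorem~\ref{theo: tc}, to a classical well-posedness statement for martingale problems with \emph{bounded} coefficients. Since $\a$ and $\b$ are locally bounded, fix a continuous $\f\colon\mathbb{R}^d\to(0,\infty)$ for which $\f\a$ and $\f\b$ are (globally) bounded; such an $\f$ is Borel and locally bounded away from zero and infinity, so Theorem~\ref{theo: tc}, applied to the coefficients $(\f\a,\f\b)$ with time-change function $\f$, gives that existence and uniqueness hold simultaneously for the MPs $(\f\a,\f\b,x_0)$ and $\bigl(\f^{-1}\f\a,\f^{-1}\f\b,x_0\bigr)=(\a,\b,x_0)$. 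It therefore suffices to establish the asserted existence/uniqueness for the bounded problem $(\f\a,\f\b,x_0)$.

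For existence: under (A1) the coefficients $\f\a,\f\b$ are continuous and bounded, so $(\f\a,\f\b,x_0)$ has a (conservative) solution by the classical existence theorem for bounded continuous coefficients \cite[Theorem 6.1.6]{SV}; under (A2) the matrix $\f\a$ is bounded and $\det(\f\a)=\f^{d}\det(\a)$ is locally bounded away from zero (because $\f$ is continuous and positive, hence locally bounded away from zero), whence $\f\a$ is locally uniformly elliptic and $(\f\a,\f\b,x_0)$ has a (conservative) solution by \cite{ROZKOSZ1991187}. In both cases Theorem~\ref{theo: tc} transports existence to $(\a,\b,x_0)$; since (A4) implies (A1), the existence half of the second claim is covered as well.

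For uniqueness: under (A3) the matrix $\f\a$ is continuous, bounded and, by compactness, uniformly elliptic on every ball, so extending it outside a ball $B_R$ to a globally bounded, continuous, uniformly elliptic coefficient and combining the Stroock--Varadhan uniqueness theorem \cite[Theorem 7.2.1]{SV} with the localization/patching principle \cite[Theorem 6.6.1]{SV} shows that $(\f\a,\f\b,x_0)$ is well-posed, whence so is $(\a,\b,x_0)$. Under (A4), writing $\sigma$ for a locally Lipschitz matrix with $\sigma\sigma^{\top}=\a$, the SDE $dX_t=\sigma(X_t)\,dW_t+\b(X_t)\,dt$, $X_0=x_0$, has a unique strong solution up to its explosion time by the localized Lipschitz existence--uniqueness theorem \cite[Theorem IV.3.1]{IW89}; its law solves $(\a,\b,x_0)$, and pathwise uniqueness together with the Yamada--Watanabe theorem (in the possibly explosive martingale-problem formulation) yields uniqueness in law.

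The conceptual content sits entirely in Theorem~\ref{theo: tc}; the only point requiring care is the bookkeeping for the \emph{generalized} (explosive) martingale problem, since the classical results invoked are usually stated in the conservative setting — one must check that the localization step under (A3) and the passage from the SDE to the martingale problem under (A4) stay valid up to and including the explosion time. This is routine, and Theorem~\ref{theo: tc} is precisely what circumvents the analogous difficulty in the existence arguments by reducing to globally bounded coefficients.
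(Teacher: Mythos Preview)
Your proof is correct and takes essentially the same approach as the paper: reduce via Theorem~\ref{theo: tc} to the bounded-coefficient problem $(\f\a,\f\b,x_0)$ using a continuous $\f$ that damps $\a$ and $\b$, then invoke the classical existence/uniqueness results from \cite{SV,ROZKOSZ1991187,IW89}. Your treatment is slightly more explicit than the paper's in two places---you spell out the localization needed under (A3) to pass from local to global ellipticity, and you note that (A4)$\Rightarrow$(A1) for the existence half---but the strategy and the references are the same.
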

\begin{proof}
	Let \(\f \colon \mathbb{R}^d \to (0, \infty)\) be a continuous function such that \(\f \a\) and \(\f \b\) are bounded. Such a function can be constructed as follows:
	Set \[\g \triangleq \sum_{k = 1}^\infty a^{-1}_k \1_{[k - 1, k)}, \text{ where } a_k \triangleq \sup_{\|x\| \leq k} \|\b(x)\| \vee \sup_{\|x\| \leq k} \|\a(x)\| \vee 1,\] and let \(\z \colon \mathbb{R}_+ \to (0, \infty)\) be a continuous function \(\z \leq \g\). Then, \(\f (x) \triangleq \z (\|x\|)\) has the claimed properties.
	In case one of (A1) -- (A3) holds, the MP \((\f \a, \f \b, x_0)\) has a (conservative) solution and in case (A3) holds the solution is even unique. With these observations at hand, Theorem \ref{theo: tc} implies that existence holds for the MP \((\a, \b, x_0)\) under either of (A1) -- (A4) and that uniqueness holds under (A3). That uniqueness also holds under (A4) is well-known, see \cite[Theorem IV.3.1]{IW89}. 
	Finally, we provide references for the existence and uniqueness statements concerning the MP \((\f \a, \f \b, x_0)\): For existence under (A1) and (A2) see \cite[Theorem 6.1.7]{SV} and \cite[Corollary 1]{ROZKOSZ1991187} respectively, and for existence and uniqueness under (A3) see \cite[Theorem 7.2.1]{SV}.
\end{proof}
\begin{remark}
	Existence under (A1) is also implied by \cite[Theorems IV.2.3]{IW89} and existence and uniqueness under (A3) is implied by \cite[Theorems 1.13.1]{pinsky1995positive}. 
	Local integrability conditions, which are weaker than (A2), for the existence of a solution with not necessarily continuous paths (the left-limit at \(\theta\) needs not to exist) are given in \cite[Theorem 3.1]{10.2307/44239216}.
\end{remark}

%

\end{document}